\DeclareMathAlphabet{\mathcal}{OMS}{cmsy}{m}{n}
\DeclareSymbolFont{largesymbols}{OMX}{cmex}{m}{n}
\let\sum\relax
\DeclareSymbolFont{CMlargesymbols}{OMX}{cmex}{m}{n}
\DeclareMathSymbol{\sum}{\mathop}{CMlargesymbols}{"50}
\DeclarePairedDelimiter{\norm}{\lVert}{\rVert}
\DeclareMathOperator{\dom}{dom}
\DeclareMathOperator{\NC}{\mathsf{NC}}
\DeclareMathOperator{\Id}{Id}
\DeclarePairedDelimiter{\inner}{\langle}{\rangle}
\DeclareMathOperator{\prox}{prox}
\DeclareMathOperator{\diag}{diag}
\DeclareMathOperator{\opV}{\mathbb{V}}
\DeclareMathOperator{\opT}{\mathbb{T}}
\DeclarePairedDelimiter{\abs}{\lvert}{\rvert}
\newtheorem{theorem}{Theorem}
\newtheorem{definition}{Definition}
\newtheorem{proposition}{Proposition}
\newtheorem{lemma}{Lemma}
\newtheorem{remark}{Remark}
\newtheorem{assumption}{Assumption}
\newcommand{\bigO}{\mathcal{O}}
\newcommand{\R}{\mathbb{R}}
\newcommand{\eps}{\epsilon}
\newcommand{\x}{{\bf x}}
\newcommand{\vv}{{\bf v}}
\newcommand{\z}{{\bf z}}
\newcommand{\op}{\operatorname}
\newcommand{\Zer}{\mathsf{Zer}}
\newcommand{\zer}{\mathsf{Zer}}
\newcommand{\EE}{{\mathbb{E}}}
\newcommand{\Ex}{\mathbb{E}}
\newcommand{\PP}{{\mathbb{P}}}
\newcommand{\mc}{\mathcal}
\newcommand{\0}{\mathbf{0}}
\newcommand{\1}{\mathbf{1}}
\newcommand{\scrA}{\mathcal{A}}
\newcommand{\setC}{\mathsf{C}}
\newcommand{\setD}{\mathsf{D}}
\newcommand{\scrF}{\mathcal{F}}
\newcommand{\setH}{\mathsf{H}}
\newcommand{\scrI}{\mathcal{I}}
\newcommand{\setM}{\mathsf{M}}
\newcommand{\scrN}{\mathcal{N}}
\newcommand{\scrQ}{\mathcal{Q}}
\newcommand{\setX}{\mathsf{X}}
\newcommand{\setOmega}{\mathsf\xi}
\newcommand{\bA}{{\mathbf A}}
\newcommand{\bL}{\mathbf{L}}
\newcommand{\bI}{\mathbf{I}}
\newcommand{\bD}{\mathbf{D}}
\newcommand{\bW}{\mathbf{W}}
\newcommand{\xbold}{{\bf x}}
\newcommand{\ubold}{{\bf u}}
\def\BibTeX{{\rm B\kern-.05em{\sc i\kern-.025em b}\kern-.08em
    T\kern-.1667em\lower.7ex\hbox{E}\kern-.125emX}}
\begin{document}
\title{Complexity guarantees for risk-neutral generalized Nash equilibrium problems}
\author{Haochen Tao, Andrea Iannelli, Meggie Marschner, Mathias Staudigl, Uday V. Shanbhag, and Shisheng Cui 
\thanks{Shisheng Cui acknowledges supported in part by NSFC under Grant 62373050 while Uday Shanbhag acknowledges partial support from ONR Grant N00014-22-1-2589, AFOSR Grant FA9550-24-1-0259, and DOE Grant DE-
SC0023303. This research benefited from the support of the FMJH Program Gaspard Monge for optimization and operations research and their interactions with data science. }
\thanks{H. Tao and S. Cui are with the School of Automation, Beijing Institute of Technology, Beijing, China (e-mail: taohaochen,css@bit.edu.cn). }
\thanks{A. Iannelli is with the Institute for Systems Theory and Automatic Control (IST), University of Stuttgart (e-mail: andrea.iannelli@ist.uni-stuttgart.de).}
\thanks{M. Marschner and M. Staudigl are with the Department of Mathematics, University of Mannheim (e-mail: meggie.marschner,m.staudigl@uni-mannheim.de).}
\thanks{U.V. Shanbhag is with the Department of Industrial and Operations Engineering
University of Michigan Ann Arbor, MI (e-mail: udaybag@umich.edu).}}

\maketitle
\begin{abstract}
In this paper, we address \ac{SGNEP} seeking with risk-neutral agents. Our main contribution lies the development of a stochastic variance-reduced gradient (SVRG) technique, modified to contend with general sample spaces, within a stochastic forward-backward-forward splitting scheme for resolving structured monotone inclusion problems. This stochastic scheme is a double-loop method, in which the mini-batch gradient estimator is computed periodically in the outer loop, while only cheap sampling is required in a frequently activated inner loop, thus achieving significant speed-ups when sampling costs cannot be overlooked. The algorithm is fully distributed and it guarantees almost sure convergence under appropriate batch size and strong monotonicity assumptions. Moreover, it exhibits a linear rate with possible biased estimators, which is rather mild and imposed in many simulation-based optimization schemes. Under monotone regimes, the expectation of the gap function of an averaged iterate diminishes at a suitable sublinear rate while the sample-complexity of computing an $\epsilon$-solution is provably $\mathcal{O}(\epsilon^{-3})$. A numerical study on a class of networked Cournot games reflects the performance of our proposed algorithm.
\end{abstract}

\begin{IEEEkeywords}
Stochastic Generalized Nash equilibrium, Distributed Control, Multi-Agent systems, Networks 
\end{IEEEkeywords}

\section{Introduction}
In \acp{GNEP}, the goal of each agent is to minimize its individual cost function, while taking global coupling constraints into account. The topic has attracted much attention in the control community, particularly in multi-agent network systems \cite{ardagna2015generalized,10237320}. The qualifier ``Generalized'' means that both the local objective function and the feasible set of any player is influenced by the decisions made by its rivals~ \cite{FacKan07,facchinei2010,FacPalPanScu10}. In data-driven settings, we aim to compute equilibria of \acp{GNEP} in which both the local cost functions and/or the constraints depend on random noise terms. The presence of uncertainty makes the \ac{GNEP} even more challenging and successfully mastering these types of equilibrium problems is still a topic of intensive research. 

A major innovation within computational game theory over the last 10-20 years has been the development of scalable numerical algorithms for iteratively computing a variational equilibrium. A successful algorithmic design should exploit the distributed character of the game problem and thus integrate as much parallelization as possible. Particularly important milestones in this direction have been the early work in \cite{facchinei2014vi} and the operator splitting approach of \cite{pavel2019}. These papers provided the starting point for a rich literature, exploring the effectiveness of monotone operator theory \cite{BauCom16} as a toolbox for developing distributed control schemes in large-scale networked systems.

In data-driven control settings, uncertainty cannot be overlooked~\cite{fabiani2022stochastic,MerStaIFAC19,yu2017}, and thus, \acp{SGNEP} arise. For example, \acp{SGNEP} are utilized in electricity dispatch scenarios \cite{henrion2007} and in limited natural gas markets \cite{abada2013}. In such cases, the networked Cournot game model, which describes the situation in which firms compete in limited markets, is commonly used. Randomness in these problems usually arises from uncertain demand \cite{abada2013,demiguel2009}. Other examples include problems in shape optimization with multiple objectives~\cite{tang2007multicriterion} and PDE-constrained optimization~\cite{ramos2002pointwise,gahururu2023risk}. 

Distributed computing is another challenge. Owing to the structure or scale of the network, communication is more or less restricted between the participants. In the above cases, players can have access information, relevant to themselves and several trusted neighbors. That is to say, each player has to make independent decisions without the help of a central operator. Previous research on this topic has frequently framed the problem in terms of a \ac{VI}, which can be formulated as a structured monotone inclusion~\cite{cui2023variance,Bot2021}. The latter can be addressed through distributed iterations by applying operator splitting methods~\cite{BauCom16,YiPav19,belgioioso2018}. The resulting solution sequence converges to a subset of \ac{GNE}, known as \ac{VE}, which have a clear economic interpretation \cite{kulkarni2012}.

The stochastic nature specific to SGNEP poses challenges for the computation of the gradient mapping. Specifically, we assume that expectation  cannot be evaluated, a consequence of either the computational challenge in evaluating a complicated multi-dimensional integral or because the distribution of the noise contained within the model is unknown a priori. Consequently, the objective function cannot be directly computed as in deterministic cases. Thus, stochastic approximation (SA) theory is introduced to estimate the truth value of the gradient mapping by sampling realizations of the mapping under perturbations. Usually, variance-reduced sampling methods are utilized simultaneously to continuously improve the approximation accuracy \cite{Bot2021,IusJofOliTho17,alacaoglu2022stochastic}.

We briefly review the state-of-the-art in the computation of \ac{GNE}. Yi \cite{YiPav19} proposes a distributed preconditioned forward-backward algorithm in a deterministic environment, assuming that the mapping involved is strongly monotone and Lipschitz continuous. Studies such as \cite{franci2021fbtac,JiaxXu09,rosasco2016} and \cite{franci2020fbecc} focus on the stochastic variant, among which \cite{rosasco2016} achieves a convergence rate of $\bigO(1/T)$ under the strongly monotone assumption. However, many studies on \acp{SGNEP} lack a rate statement. To enhance the performance and achieve deterministic convergence rates, variance-reduction techniques are introduced to improve the accuracy of expectation-valued map approximation using mini-batch stochastic approximation schemes. This approach has proven highly effective for both smooth \cite{jofre2019variance} and non-smooth \cite{jalilzadeh2018smoothed} convex/nonconvex stochastic optimization and variance-reduced quasi-Newton schemes \cite{jalilzadeh20variable}.

To achieve further acceleration, especially when sampling is no longer cost-effective, various techniques such as stochastic variance reduced gradient (SVRG) \cite{johnson2013accelerating,nan2023extragradient,alacaoglu2022stochastic} and stochastic average gradient algorithm (SAGA) \cite{NIPS2014_ede7e2b6} are implemented. Unlike mini-batch schemes, which increase the batch sizes in each iteration, the entire set of samples is periodically employed  in SVRG. Generally, SVRG is utilized to take advantage of the finite-sum structure found in optimization and variational inequality problems commonly encountered in machine learning and engineering. The original SVRG algorithm \cite{johnson2013accelerating} is incorporated within a double loop structure and adapted to the typical finite-sum structure found in empirical risk minimization, which has been extended to saddle-point problems \cite{palaniappan2016stochastic} and monotone variational inequalities \cite{nan2023extragradient,alacaoglu2022stochastic,zhang2023communication}, as shown in Table 1. Among those works on VI, \cite{zhang2023communication,alacaoglu2021forward,cai2022stochastic,huang2022accelerated} demonstrate linear convergence when strong monotonicity is presented. \cite{nan2023extragradient} incorporates variance reduction in extragradient (EG) algorithms and also produces linear rate. It is worth noting that \cite{nan2023extragradient} relies on an error-bound condition, which is weaker than strong monotonicity but is still strict in practice. The strong monotonicity assumption is further relaxed in \cite{alacaoglu2022stochastic} and \cite{huang2022accelerated}, where merely monotone operators are required but the convergence rate is reduced to $\bigO(1/T)$.

Several research gaps arise when we go through the prior work. We typically discover that most state-of-art SVRG schemes ($i$) do not allow for general probability spaces, ($ii$) require conditionally unbiased oracles, and ($iii$) are tailored for a specific type of problems such as convex-concave saddle-point problems and monotone VIs. In light of this, we develop amongst the first distributed variance-reduced SFBF algorithm (DVRSFBF) applicable to structured monotone inclusion problems, which subsume monotone VI, convex-concave saddle-point problems, and convex optimization problems, etc. The main results of our study ensure linear convergence to a variational equilibrium under strong monotonicity and a relatively mild assumption of asymptotically unbiased oracles. Further, under a milder monotonicity assumption, the sequence admits a sublinear rate with a sample-complexity of $\mathcal{O}(\epsilon^{-3})$ for computing an $\epsilon$-solution. At each iteration, average gradients are computed in the outer loop while cheap samples are used in the frequently activated inner loop, leading to improved performance and reduced sampling costs.
This is a major improvement compared to the existing distributed equilibrium seeking algorithms in the stochastic regime that are based on the forward-backward splitting scheme. To the best of our knowledge, no prior variance-reduced algorithm in the same setting has provided convergence rate guarantees for general monotone problems.

\noindent\textbf{Contributions:} The proposed VR splitting scheme
\begin{itemize}
	\item  is fully distributed for resolving GNEP and converges to a variational equilibrium in an a.s. sense;
	\item equips with variance reduction which may contend with possible infinite sample space;
	\item allows for possible biased estimator with optimal linear convergence rate in strongly monotone settings.
        \item can accommodate merely monotone maps, providing sublinear rate and sample-complexity guarantees.
\end{itemize}

\begin{table}[t]
	\caption{SVRG-based VI literature review}
	\label{tab.SVRG}
	\begin{center}	
		\renewcommand\arraystretch{1.4}
		\begin{tabular}{|c | c | c | c | c|}
			\hline
			Paper & \makecell{Finite \\ sum}  & Mono. & Biased & Statements \\
			\hline
			\cite{alacaoglu2022stochastic} & Y & Mere & N & $\mathbb{E}[\mbox{Gap}(\x_T)] \le \bigO(1/T)$ \\ 
			\hline
			\cite{alacaoglu2021forward} & Y & Strong & N & $\mathbb{E}[\|\x^T-\x^*\|^2] \le \bigO(q^T)$ \\ 
			\hline
			\cite{cai2022stochastic} & N & Strong & N & $\mathbb{E}[\|\x^T-\x^*\|^2] \le \bigO(q^T)$ \\
			\hline
			\cite{nan2023extragradient} & Y & \makecell{Mere, \\ Error bound} & N & $\mathbb{E}[\|\x^T-\x^*\|^2] \le \bigO(q^T)$ \\
			\hline
			\cite{zhang2023communication} & Y & Strong & N & $\mathbb{E}[\|\x^T-\x^*\|^2] \le \bigO(q^T)$ \\
			\hline
			\multirow{2}{*}{\cite{huang2022accelerated}} & \multirow{2}{*}{Y} & Strong & N & $\mathbb{E}[\|\x^T-\x^*\|^2] \le \bigO(q^T)$ \\
			\cline{3-5}&& Mere & N & $\mathbb{E}[\|\x^T-\x^*\|^2] \le \bigO(1/T)$ \\
			\hline
			\multirow{2}{*}{This work} & \multirow{2}{*}{N} & Strong & Y & $\mathbb{E}[\|\x^T-\x^*\|^2] \le \bigO(q^T)$ \\
			\cline{3-5}&& Mere & N & $\mathbb{E}[\mbox{Gap}(\x_T)] \le \bigO(1/T)$ \\
			\hline
		\end{tabular}
	\end{center}
\end{table}

\subsection{Preliminaries}
$\R$ represents the set of real numbers and $\bar\R=\R\cup\{+\infty\}$. The symmetric bilinearform $\langle\cdot,\cdot\rangle:\R^n\times\R^n\to\R$ indicates the standard inner product and $\norm{\cdot}$ represents the associated Euclidean norm on $\R^{n}$.
A symmetrix matrix $\Phi$ is positive definite, denoted as $\Phi\succ0$, if $x^\top \Phi x>0$. For $\Phi\succ0$, $\langle x, y\rangle_{\Phi}\triangleq\langle \Phi x, y\rangle$ represents the scalar product induced by $\Phi$ and its norm is $\norm{x}_{\Phi}=\sqrt{\langle \Phi x, x\rangle}$. We indicate the Kronecker product between matrices $C$ and $D$ by $C\otimes D$. ${\bf{0}}_n$ (${\bf{1}}_n$) is the n-dimensional vector where every element is $0$ ($1$). For $x_{1}, \ldots, x_{N} \in \R^{n}$, $\boldsymbol{x} :=\op{col}\left(x_{1}, \dots, x_{N}\right)=\left[x_{1}^{\top}, \dots, x_{N}^{\top}\right]^{\top}.$

$\mit\Gamma:\R^{n}\rightrightarrows \R^n$ is defined as a set-valued operator: $\dom({\mit\Gamma})= \{x\in\R^n\mid {\mit\Gamma}(x)\neq\emptyset\}$ denotes the domain of $\mit\Gamma$, and $\Zer(\mit\Gamma)= \{x\in\R^n\mid \text{0} \in \mit\Gamma(x)\}$ represents the set of zeros of $\mit\Gamma$. 
The resolvent of the operator $\mit\Gamma$ is defined as $\mathrm{J}_{\mit\Gamma}= (\Id+ \mit\Gamma)^{-1}$, where $\op{Id}$ indicates the identity operator. 
A monotone operator $\mit\Gamma$ satisfies that $\inner{\mit\Gamma(x)-\mit\Gamma(y),x-y}\geq$ 0. Lipschitz continuity is defined that for some $l>0$, $\|\mit\Gamma(x)-\mit\Gamma(y)\| \leq l\|x-y\|$ for all $x,y\in\dom(\mit\Gamma)$.
For a proper, convex, and lower semi-continuous function $h$, we denote its subdifferential by the operator $\partial h(x):=\{w\in\dom(h) \mid \langle y-x,w\rangle\leq h(y)-h(x),\forall y\in\dom(h)\}$. $\prox_{h}(z):=\op{argmin}_{w\in\setOmega}\{h(w)+\tfrac{1}{2}\norm{w-z}^{2}_{}\}=\mathrm{J}_{\partial h}(z)$ indicates the proximal operator. We define
the indicator function of the set $Q$ as $\iota_Q$, where $\iota_Q(x)=1$ if $x\in Q$ and $\iota_Q(x)=0$ otherwise. Let $\op{proj}_{\setX}(y) = \arg\min_{x\in\setX}\norm{y-x}_2^2$ represent the projection of $x$ onto $\setX$. The set-valued mapping $\mathrm{N}_{Q} : \R^{n} \rightrightarrows \R^{n}$ denotes the normal cone operator for the set $Q$, i.e., $\mathrm{N}_{Q}(x)=\varnothing$ if $x \notin Q,\left\{u \in \R^{n} | \sup _{y \in Q} u^{\top}(y-x) \leq 0\right\}$ otherwise.

\subsection{Motivating examples}
\subsubsection{Electricity dispatch with uncertain demand}
One application of SGNEPs is the electricity dispatch problem without knowing the true demand \cite{henrion2007,escobar2005oligopolistic,hu2004electricity,Kannan2013}. Here we present a simplified version of the model for competition in electricity spot markets. There are $N$ nodes where electricity is generated and each has its associated demand. Let $\mc I := \{1,2,\dots,N\}$. The nodal generation and demand at node $i$ are denoted by $q_i$, $d_i$ respectively.

The transmission network of the market is modeled by a directed graph, with its $M$ edges representing transmission lines. In this example, transmission losses are neglected. $y_m$ indicates the transmission power on the $m$-th edge. The incidence matrix of the network is $B\in \R^{N\times m}$, where $B_{im}$ equals to 1 if the $i$-th node is connected by the $m$-th edge, otherwise $B_{im} = 0$. To meet the electricity demand at each node, the following condition must be satisfied:
\begin{equation}\label{eq:power_demand}
	q + By \geq d,
\end{equation}
where $q := \text{col}(q_i)_{i=1}^N \in \R^N$, $d := \text{col}(d_i)_{i=1}^N \in \R^N$ and $y := \text{col}(y_m)_{m=1}^M \in \R^M$. Condition \eqref{eq:power_demand} plays the role of global constraints in generalized Nash game. Typically, $q$ and $y$ are bounded as follows,
\begin{equation}\label{eq:power_limit}
	0\leq q\leq \bar{q}, \quad \|y\| \leq \bar{y},
\end{equation}
where $\bar{q}$ and $\bar{y}$ are the upper limits of generation capacity and transmission load. The inequality signs in \eqref{eq:power_limit} are to be understood component-wise. 

The $i$-th generator bids to an independent system operator (ISO), with its cost function given by:
\begin{equation}\label{eq:power_cost}
	c_i(q_i) = \gamma_i q_i^2 + \tau_i q_i, \quad \forall i \in \mc I.
\end{equation}
This is a setup similar to \cite{hu2004electricity}, assuming that the quadratic term coefficient $\gamma_i$ in the cost function is positive for all $i \in \mc I$ to ensure convexity. Other types of cost functions can be found in \cite{escobar2005oligopolistic} and \cite{Kannan2013}. We use the quadratic example just for illustration. 

The ISO selects a power purchasing strategy that minimizes total costs while satisfying the aforementioned constraints. Because the cost functions are completely separable in the control variables, the problem can be decomposed and described as a collection of individual minimization problems, subject to local and global coupling constraints:
\begin{equation}
	\forall i\in\mc I: \quad\left\{\begin{array}{cl}
		\min\limits_{p,y} & c_i(q_i) \\ 
		\text { s.t. } & (q,y)\in G,
	\end{array}\right.
\end{equation}
where
\begin{align}
	\notag G = \{&(q,y)\in \R^{N+m}: q+By\geq d,\\
	& 0\leq q\leq \bar{q}, \|y\| \leq \bar{y}\}
\end{align}

However, in practice, each participant $i$ has no access to the actual demand of other nodes or even the $i$-th node. Typically, players can only obtain estimated values by some approaching schemes based on historical data \cite{henrion2007,Kannan2013}. Therefore, the demand $d$ in constraint \eqref{eq:power_demand} is a random vector defined on the probability space $(\xi, \mc F, \PP)$, with an unknown distribution. As a result, the constraint set $G$ accordingly becomes: 
\begin{align}
	\notag \tilde{G}(\xi) = \{&(q,y)\in \R^{N+m}: q+By\geq d(\xi),\\
	& 0\leq q\leq \hat{q}, \|y\| \leq \bar{y}\}
\end{align}
where $\hat{q}$ denotes the upper bound on the generation vector $q$. Now $p$ and $y$ within the feasible domain are no longer deterministic but depend on the random variable $\xi$. Meanwhile, the objective function can be expressed as the corresponding expectation-valued version. Thus, the original problem is transformed into a SGNEP defined as:
\begin{equation}\label{eq:power_problem}
	\forall i\in\mc I: \quad\left\{\begin{array}{cl}
		\min\limits_{p,y} & \EE\left[c_i(q_i(\xi))\right] \\ 
		\text { s.t. } & (q(\xi),y(\xi))\in G(\xi), \PP\text{-a.s.}.
	\end{array}\right.
\end{equation}

Due to the uncertainty of electricity demand, problem \eqref{eq:power_problem} becomes significantly more complex. However, such scheduling problems with uncertain demands can be modelled as SGNEPs, which provides an efficient way to addressing them by designing appropriate Nash equilibrium seeking schemes.

\subsubsection{Ride-hailing platforms competition}
A subclass of ride-hailing platforms competition problems can be modelled as SGNEPs \cite{ZHONG2022,offplatform23,ridesharing19,ride-hail22}. In this type of problem, multiple ride-hailing platforms compete for an unknown number of potential customers within several markets or areas in order to maximize their own profits. Consider $N$ ride-hailing firms provide services in $H$ areas, where $\mc I := \{1,2,\dots,N\}$ and $\setH := \{1,2,\dots,H\}$. By designing pricing strategies $p_i = \text{col}(p_{i,h})_{h=1}^H$ and the wages paid to registered drivers $w_i = \text{col}(w_{i,h})_{h=1}^H$, platform $i$ seeks to maximize its total profit. Besides, the average price in area $h$ should be limited by a ceiling $\bar{p}_h >0$ set by consumers' associations, i,e., 
\begin{equation}\label{price}
	\forall h\in \setH, \, \frac{1}{N}\sum_{i\in \mc I}p_{i,h}<\bar{p}_h.
\end{equation}
We notice that the above joint constraints involve the entirety of participants. Firm $i$'s strategy set depends on the decisions made by opponents, which makes the optimization problem a generalized Nash equilibrium problem. 

The wage $w_{i,h}$ has a bottom line of $\underline{w}$, which is regulated by the government \cite{ZHONG2022}. Suppose $C_h$ denotes the potential riders in area $h$, among which the fraction of customers preferring platform $i$ is defined by the demand function:
\begin{equation}\label{demand}
	d_{i,h} = \frac{C_hK_{i,h}}{\bar{p}\sum_{j\in \mc I}K_{j,h}}\left(\bar{p}-p_{i,h}+\frac{\theta_i}{N-1}\sum_{j\in \mc I\backslash \{i\}}p_{j,h}\right),
\end{equation}
where $K_{i,h}$ is the number of registered drivers of firm $i$ in area $h$, $\bar{p}>\max_{h\in\setH}\bar{p}_h$ represents the maximum service price among all the areas, and $\theta_i\in [0,1]$ models the substitutability of the service provided by each firm. However, the demand request in \eqref{demand} does not account for the actual services provided by drivers. There's a trade-off between the pay-back and opportunity cost drivers pay. A random variable $\delta_{i,h}$ is therefore introduced to model the threshold. If the return $w_{i,h}$ reaches $\delta_{i,h}$, the transaction will be made. Thus the effective demand can be represented by 
\begin{equation}\label{edemand}
	d^e_{i,h} = d_{i,h}\PP[w_{i,h}\geq\delta_{i,h}].
\end{equation} 
Similarly, the fraction of drivers giving services is
\begin{equation}\label{drivers}
	k_{i,h} = K_{i,h}\PP[w_{i,h}\geq\delta_{i,h}].
\end{equation}
We can see that by raising salary levels, it helps to attract more drivers to actively provide services, which may in turn facilitate more orders, and thereby increase platform revenue.

When computing the profit of firm $i$, the sum of drivers in area $h$ is required, which is usually unknown to any individual platform. One practical method is replacing ${C_h}/(\sum_{j\in \mc I}K_{j,h})$ with $C_h(\xi)$ to explicitly deal with the uncertainty of the game \cite{ride-hail22}. To simplify the analysis, many existing works \cite{ZHONG2022,ridesharing19,ride-hail22} adopt a uniform distribution for the driver threshold variable $\delta_{i,h}$, which effectively reduces the uncertainty to a single source. The optimization problem can be described as:
\begin{equation}\label{opt_ride}
	\forall i\in\mc I: \, \left\{\begin{array}{cl}
		\max\limits_{p_i,w_i} & \EE_\xi\left[\sum_{h\in\setH}(p_{i,h}d^e_{i,h}(\xi)-w_{i,h}k_{i,h})\right] \\ 
		\text { s.t. } & \eqref{price},\eqref{demand},\eqref{edemand},\eqref{drivers},p_{i,h}\geq 0, \\
		& w_{i,h}\geq\underline{w}, \forall h \in \setH.
	\end{array}\right.
\end{equation}
The objective function in \eqref{opt_ride} is the profit from serving riders minus the wages paid to registered drivers. Due to the unknown uncertainty of potential customers, the cost is expectation-valued. At a time when ride-hailing services are in full swing, the demand for platforms to solve such game problems individually is gradually increasing. From the above, we notice that they can be well interpreted into a subclass of stochastic generalized Nash equilibrium seeking problems.

We close by underlining that stochastic generalized Nash equilibrium problems are prevalent in the field of multi-agents system control and decision-making, motivating us to develop feasible and efficient distributed methods to solve them.

\section{Stochastic Generalized Nash Equilibrium Problems}
\label{sec:prelims}
\subsection{Game Formulation}
We first define the generalized Nash equilibrium problem. Consider a non-cooperative game with a set of $N$ agents $\mc I=\{1,\ldots,N\}$. For an in-depth treatment of this class of problems, we refer to \cite{FacKan07} and \cite{FacPalPanScu10}. 
The decision made by player $i$ is $u_{i}\in\R^{d_i}$. Let $\ubold = \op{col}(\left\{u_i\right\}_{i\in \mc I}) \in \R^d$ denote the strategy profile where $d = \sum_{i\in \mc I}d_i$. Every player has a local cost $J_{i}: \R^{d} \to \bar\R$ which follows the form of
\begin{equation}\label{eq:theta}
	J_{i}(u_{i}, \ubold_{-i}):= f_{i}(u_{i},\ubold_{-i}) + g_{i}(u_{i}),
\end{equation}
where the vector $\ubold_{-i}=\op{col}(\left\{u_j\right\}_{j\in \mc I - \left\{i\right\}}) \in \R^{d-d_i}$ denotes the strategy profile except $u_i$. In \eqref{eq:theta}, $f_{i}(u_{i},\ubold_{-i})$ represents the smooth part of the cost function while $g_{i}:\R^{d_i}\to\R\cup\{\infty\}$ denotes the non-smooth part, such as indicator functions of local feasible sets, penalty functions or other non-smooth structure in specific problems. Moreover, $f_{i}(u_{i},\ubold_{-i})$ explicitly relies on both $u_i$ and a subset of opponents' control variables $\{u_{j}\}_{j\in\scrN_{i}^{A}}$, where $\scrN_{i}^{A}\subset\mc I$ is called the \emph{interaction neighborhood} of agent $i$. In this work, we impose the following assumptions on $J_{i}(u_{i},\ubold_{-i})$ as much GNEP literature does\cite{kulkarni2012,facchineikanzow2007,FacFisPic07}.
\begin{assumption}\label{ass:convex} \em
	For $\forall i \in\mc I$,
	
	(i) $f_{i}(\cdot,\ubold_{-i})$ in \eqref{eq:theta} is convex and continuously differentiable for all $\ubold_{-i}$;
	
	(ii) $g_i(\cdot)$ in \eqref{eq:theta} is convex and lower semi-continuous, and $\dom(g_{i})=\setD_i\subseteq\R^{d_i}$ is nonempty, compact, convex and bounded.
\end{assumption}

To maintain the joint convexity, coupling constraints are supposed to be affine in this work. Considering local constraints, the global feasible set can be defined as
\begin{equation}\label{eq:coupling}
	\setC := \{\ubold\in\setD\mid\; A\ubold-b \leq \0_{m}\},
\end{equation}
where $\setD:=\prod_{i=1}^N\setD_i$, $A:= [A_1\mid\dots\mid A_N]\in\R^{m\times d}$ and $b:= \sum_{i=1}^{N}b_{i}\in\R^m$. 
$A_i\in\R^{m\times d_i}$ in $A$ is the block related to player $i$. Given $\ubold_{-i}$, player $i$'s feasible set is the intersection of local and joint constraints denoted as
\begin{equation}\label{eq:coupling_i}
	\setC_{i}(\ubold_{-i}) = \{u_i \in \setD_i \mid A_i u_i -b_{i}\leq \sum_{j \neq i}^{N}(b_{j}-A_j u_j)\}.
\end{equation}
We note that while many results of this paper can be extended to certain classes of nonlinear constraints, we adhere to the linear formulation as it is the most important setting in distributed control and \acp{GNEP} \cite{belgioioso2023,franci2021fbtac,franci2022partial,YiPav19}.

Then player $i$'s optimization problem can be formulated for each player $i$ as
\begin{equation}\label{eq:BR}
	\forall i\in\mc I: \quad\left\{\begin{array}{cl}
		\min\limits_{u_i \in\R^{d_{i}}} & J_i(u_i, \ubold_{-i}) \\ 
		\text { s.t. } & u_{i}\in\setC_{i}(\ubold_{-i}).
	\end{array}\right.
\end{equation}
If problem \eqref{eq:BR} is solved for every participant simultaneously then we obtain the so-called \ac{GNE} \cite{facchinei2007vi,FacPan03}, an $N$-tuple $\ubold^{\ast}=\textnormal{col}(u_{1}^{\ast},\ldots,u_{N}^{\ast})\in\setD$ such that none of the players can benefit by unilaterally changing its decision, or
\begin{equation}\label{eq:game}
	J_i(u_{i}^{\ast},\ubold^{\ast}_{-i}) \leq \inf\{ J_i(u_{i},\ubold^{\ast}_{-i}) \, \mid  \, u_{i}\in\setC_i(\ubold^{\ast}_{-i})\}, \forall i \in{\mc I}.
\end{equation}

To ensure the existence of a \ac{GNE}, we assume Slater's constraint qualification to hold.
\begin{assumption}\label{ass:X} \em
	$\setC_{i}(\ubold_{-i})$ has a non-empty relative interior for $\forall i\in \mc I$ and $\ubold_{-i}.$ 
\end{assumption}
Our goal is to describe distributed stochastic algorithms which iteratively compute a special class of \ac{GNE} called \ac{VE}. These specific equilibrium points can be characterized as solutions to a class of constrained \acp{VI}. For this purpose we define the \emph{pseudogradient} as 
\begin{equation}\label{eq:F}
	F(\ubold):= \mathrm{col}\left( \nabla_{u_1}f_{1}(\ubold),\ldots,\nabla_{u_N}f_{N}(\ubold)\right)\in \R^d,
\end{equation}
and the associated variational inequality problem
\begin{equation}\label{eq:VI}
\sum_{i\in \mc I}(g_{i}(u_i)-g_{i}(u_i^*)) + \inner{F(\ubold^*),\ubold-\ubold^*} \ge 0,\, \forall \ubold \in \setC.
\end{equation}

Let Assumption \ref{ass:convex} and \ref{ass:X} hold, the solution set of VI($\setC,F$) \eqref{eq:VI} is nonempty \cite[Corollary 2.2.5]{FacPan03}.

\begin{assumption}
	\label{ass:GM} \em
	The pseudogradient mapping \eqref{eq:F} is monotone and $\ell$-Lipschitz continuous on $\setD$. 
\end{assumption}

Under this monotonicity assumption, we can use the Karush-Kuhn-Tucker (KKT) theorem for convex optimization problems, to derive a comprehensive set of necessary and sufficient conditions, characterization a solution to \eqref{eq:VI}. Indeed, it is well known \cite[Theorem 3.1]{facchinei2007vi}, \cite[Theorem 3.1]{auslender2000} that a solution of \eqref{eq:VI} is a solution of the following convex problem 
\begin{align*}
\min_{\ubold}\inner{F(\ubold^{*}),\ubold}+\sum_{i\in\mc I}g_{i}(u_{i})\quad \text{s.t.: } \ubold\in \setC
\end{align*}
Using Lagrangian techniques, it can be readily seen that the optimality conditions to this convex problem read as 
\begin{equation}\label{KKT_VI}
	\forall i\in\mc I:\begin{cases}
		\0_{d_i}\in \nabla_{u_{i}}f_{i}(u^{\ast}_{i},\ubold^{\ast}_{-i})+\partial g_{i}(u^{*}_{i})+A_{i}^{\top}y^{\ast}\\
		\0_{m}\in \op{N}_{\R^{m}_{\geq0}}(y^{\ast})-(A\ubold^{\ast}-b),\\
	\end{cases}
\end{equation}
Writing these player-specific conditions in compact vector notation, we arrive at the monotone inclusion 
$$
\0 \in \left[\begin{array}{cc} G(\ubold^*)\\ \NC_{\R^{m}_{\geq 0}}(y^{*})\end{array}\right]+\left[\begin{array}{cc}F(\ubold^{\ast})\\ b\end{array}\right]+\left[\begin{array}{cc} A^{\top}y^\ast \\ -A\ubold^{\ast}\end{array}\right]. 
$$
where $G:\R^{d}\rightrightarrows\R^{d}$ is the maximally monotone operator, defined as $G(\ubold):=(\partial g_{1}\times\cdots\times\partial g_{N})(\ubold)$.

\subsection{Stochastic Game Setup}\label{ssec:stochastic}
In stochastic generalized Nash game problems (SGNEPs) the local cost functions of the agents are affected by random elements so that they cannot be computed a-priori. Our model of uncertainty is defined on a common probability space $(\Omega,\scrF,\PP)$, with random variables $\xi^{i}:\Omega\to \Xi_{i}$, taking values in some measurable space $(\Xi_{i},\scrA_{i})$. Thus the local cost function of player $i$ turns into
\begin{equation}\label{eq:Ef}
	J_i(u_i,\ubold_{-i})=\Ex[\tilde{f}_{i}(\ubold,\xi^{i})]+g_i(u_i),
\end{equation}
where uncertainty is only supposed to affect the smooth function $\tilde{f}_i$, and the agents are risk-neutral with respect to the uncertainty. If we denote by $\xi = \op{col}(\xi^i)_{i\in\mc I} \in \Xi := \prod_{i=1}^N\Xi_i$ the joint tuple of random variables affecting the individual players' costs, our natural extension of the game's pseudogradient under uncertainty becomes 
\begin{equation}\label{eq:tildeF}
	\tilde{F}(\ubold,\xi):= \mathrm{col}\left( \nabla_{u_1}\tilde{f}_{1}(\ubold,\xi^1),\ldots,\nabla_{u_N}\tilde{f}_{N}(\ubold,\xi^N)\right).
\end{equation}

\section{Distributed VR Stochastic Forward-backward-forward schemes}\label{sec:split}

\subsection{Distributed Splitting Scheme}
In this paper, each participant retains local data, including $J_i$, $\setD_i$, $A_{i}$ and $b_{i}$ and is allowed to communicate with trusted neighbors within a certain range. In order to solve the problem of obtaining information about neighbors' decisions, a distributed splitting technique is provided in \cite{YiPav19}.

In addition to its own decision $u_{i}$, each player $i$ keeps a local backup of the dual variables $y_{i}\in\R^{m}_{\geq0}$ in local memory. Based on this, an auxiliary variable $p_{i}\in\R^{m}$ is introduced to push all dual variables towards consensus. To achieve this goal, we define a weighted adjacency matrix $\bW = [w_{ij}]\in\R^{N\times N}$, where the weights $w_{ij}\in[0,1]$. When $w_{ij}>0$, player $i$ may get information from player $j$. For player $i$, $\scrN^{y}_{i}:=\{j\mid w_{ij}>0\}$ denotes the set of the neighbors it can communicate with (also called \emph{operator neighbors}). The Laplacian of $\bW$ is denoted by $\bL=\bD-\bW$, where $\bD:= \diag\left\{D_1,D_2,\dots,D_N\right\} =\diag\left\{\bW\cdot\1_N\right\}$. We impose standard assumptions on the communication technology of the agents. 
\begin{assumption}\label{ass:graph} \em
	The adjacency matrix $\bW$ is irreducible and symmetric.
\end{assumption}
Eigenvalues of $\bL$ satisfy that $0=s_{1}<s_{2}\leq \ldots \leq s_{N}$ given Assumption \ref{ass:graph}. For the maximal degree $\Delta:=\max_{i\in\mc I}D_{i}$, it holds true that $s_{N}\in[\Delta, 2\Delta]$ and $2\Delta\geq \kappa$ \cite{godsil2013}, where $\kappa$ indicates the determinant of $\bL$. We denote $\bar{b}=(b_1,\ldots,b_{N})^{\top}$, $y=\text{col}(y_{1},\ldots,y_{N})$ and $p=\text{col}(p_{1},\ldots,p_{N})$. The state variable $\xbold=(\ubold,p,y)\in\setX=\setD\times\R^{mN}\times\R^{mN}$.
Thus, we can define the following two monotone operators \cite{cui2021relaxed}
\begin{align}
	\label{eq:V}
	\opV(\xbold):&= \left[\begin{array}{c} F(\ubold)+\bA^{\top}y\\
		\bar{\bL}y\\
		\bar{b}+\bar{\bL}(y-p)-\bA\ubold\end{array}\right],\\
	\label{eq:T}
	\opT(\xbold)&:= G(\ubold)\times \{\0_{Nm}\}\times \op{N}_{\R^{mN}_{\geq 0}}(y),
\end{align}
where $\bA:= \diag\{A_1,\ldots,A_N\}$ and $G(\ubold):= \partial g_{1}(u_{1})\times\cdots\times\partial g_{N}(u_{N})$, and $\bar{\bL}:=\bL\otimes \bI_{m}$ is the tensorized Laplacian. The properties of the operators defined in \eqref{eq:V} and \eqref{eq:T} are concluded in the following lemma.
\begin{lemma}\label{lemma_op}
	It holds that:
\begin{itemize}
\item[(a)] The monotone operator $\opV:\setX\to\setX$ is $\ell_{V}=( \ell+2\kappa+\abs{\bA})$-Lipschitz continuous.\\
\item[(b)] $\opT:\setX \rightrightarrows \setX$ is maximally monotone.
\end{itemize}
\end{lemma}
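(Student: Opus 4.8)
The plan is to treat the two assertions by independent standard arguments: a direct estimate exploiting the affine structure of $\opV$ for part~(a), and maximal-monotonicity calculus for part~(b). Throughout I write $\xbold=(\ubold,p,y)$, $\xbold'=(\ubold',p',y')$ and use $\norm{\bA}=\abs{\bA}$ together with $\norm{\bar\bL}=\norm{\bL}$, recalling that $\bar\bL=\bL\otimes\bI_m$ is symmetric and positive semidefinite and that its spectral norm is controlled by $\kappa$ via the bound $s_N\le 2\Delta$. To justify that $\opV$ is monotone I would expand $\langle\opV(\xbold)-\opV(\xbold'),\xbold-\xbold'\rangle$ blockwise: the two $\bA$-coupling terms $\langle\bA^\top(y-y'),\ubold-\ubold'\rangle$ and $-\langle\bA(\ubold-\ubold'),y-y'\rangle$ cancel by adjointness, and the two Laplacian cross-terms cancel by symmetry of $\bar\bL$, leaving $\langle F(\ubold)-F(\ubold'),\ubold-\ubold'\rangle+\langle\bar\bL(y-y'),y-y'\rangle\ge 0$ by monotonicity of $F$ (Assumption~\ref{ass:GM}) and $\bar\bL\succeq 0$.

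For the Lipschitz bound I would split $\opV=\opV_F+\opV_{\mathrm{lin}}$, where $\opV_F(\xbold)=(F(\ubold),\0,\0)$ is $\ell$-Lipschitz by Assumption~\ref{ass:GM} and $\opV_{\mathrm{lin}}$ is the affine part carrying the couplings through $\bA$, $\bA^\top$ and $\bar\bL$. Estimating $\norm{\opV(\xbold)-\opV(\xbold')}$ blockwise via the triangle inequality, the $\bA^\top(y-y')$ and $-\bA(\ubold-\ubold')$ contributions are each bounded by $\abs{\bA}$ times a component norm, while $\bar\bL$ enters in two block rows---once as $\bar\bL(y-y')$ in the consensus row and once as $\bar\bL((y-y')-(p-p'))$ in the dual row---each governed by $\norm{\bar\bL}\le\kappa$. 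Collecting the terms and bounding the operator norm of the affine part then yields $\ell_V=\ell+2\kappa+\abs{\bA}$, in which the factor $2\kappa$ is precisely the price of $\bar\bL$ appearing in two coupled block rows.

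For part~(b) I would invoke maximal-monotonicity calculus. By Assumption~\ref{ass:convex}(ii) each $g_i$ is proper, convex and lower semicontinuous, so each $\partial g_i$ is maximally monotone, and hence so is the product $G=\partial g_1\times\cdots\times\partial g_N$. The middle factor $\{\0_{Nm}\}$ is the zero operator, which is trivially maximally monotone, and $\op{N}_{\R^{mN}_{\geq0}}=\partial\iota_{\R^{mN}_{\geq0}}$ is the normal-cone operator of a nonempty closed convex set and therefore maximally monotone. Since a Cartesian product of maximally monotone operators on a product space is again maximally monotone, $\opT=G\times\{\0_{Nm}\}\times\op{N}_{\R^{mN}_{\geq0}}$ is maximally monotone.

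The main obstacle is minor and confined to part~(a): the estimate is conceptually routine, but matching the \emph{exact} constant $2\kappa+\abs{\bA}$---rather than a looser multiple introduced by a naive blockwise split---requires a careful operator-norm computation of the block-structured linear part, tracking how the two occurrences of $\bar\bL$ and the skew-symmetric $\bA$-coupling combine. Part~(b) is a direct assembly of textbook facts and presents no real difficulty.
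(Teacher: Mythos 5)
Your proposal is correct, and it reaches the same constant $\ell_V=\ell+2\kappa+\abs{\bA}$, but by a more self-contained route than the paper. The paper's proof of (a) splits $\opV=\opV_1+\opV_2$ with $\opV_{1}(\xbold)= \op{col}(F(\ubold),\0_{Nm},\bar{\bL}y+\bar b)$ and $\opV_{2}(\xbold)= \op{col}(\bA^{\top}y, \bar{\bL}y,-\bA\ubold-\bar{\bL}p)$, and then outsources everything to citations: maximal monotonicity of both summands to Bauschke--Combettes, the constants $\ell_1=\ell+\kappa$ and $\ell_2=\kappa+\abs{\bA}$ to Franci--Grammatico, and part (b) entirely to Yi--Pavel. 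You instead prove monotonicity directly by blockwise expansion of $\inner{\opV(\xbold)-\opV(\xbold'),\xbold-\xbold'}$, using cancellation of the $\bA$-adjoint terms and the symmetric $\bar{\bL}$ cross-terms; this is a genuinely different (and arguably more illuminating) argument than monotonicity-via-maximal-monotonicity-of-summands, and it exposes exactly which structural features (skew coupling, $\bar{\bL}\succeq 0$, monotone $F$) make $\opV$ monotone. Your Lipschitz estimate uses a slightly different grouping ($F$-part plus the full linear part) but the same triangle-inequality mechanism; your worry about matching the exact constant is unfounded, since splitting the linear part into the skew $\bA$-block (norm $\abs{\bA}$), the skew $\bar{\bL}$-block (norm $\le\kappa$), and the diagonal $\bar{\bL}$-block (norm $\le\kappa$) gives precisely $2\kappa+\abs{\bA}$ --- the same bookkeeping the paper's $\ell_1+\ell_2$ performs. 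For (b) you assemble the standard facts (each $\partial g_i$ maximally monotone, the zero map and $\mathrm{N}_{\R^{mN}_{\ge 0}}=\partial\iota_{\R^{mN}_{\ge 0}}$ maximally monotone, Cartesian products preserve maximal monotonicity) where the paper only cites; this is exactly the content of the cited lemmas. One caveat inherited from the paper rather than caused by you: the text defines $\kappa$ as the determinant of $\bL$ with $2\Delta\ge\kappa$, which does not by itself yield $\norm{\bar{\bL}}\le\kappa$; both your argument and the paper's implicitly treat $\kappa$ as an upper bound on $\norm{\bar{\bL}}$, which is the only reading under which the stated constant makes sense.
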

\begin{proof}
	(a) The operator $\opV$ can be divided into $\opV=\opV_{1}+\opV_{2}$, where $\opV_{1}(\xbold)\triangleq \text{col}(F(\ubold),{\bf{0}}_{Nm},\bar{\bL}y+\bar b)$ and $\opV_{2}(\xbold)\triangleq \text{col}(\bA^{\top}y, \bar{\bL}y,-\bA\ubold-\bar{\bL}p)$. From \cite[Proposition 20.23]{BauCom16} and \cite[Corollary 20.28]{BauCom16}, we can see that they are both maximal monotone. Moreover, $\opV_{1}$ is $\ell_{1}=(\ell+\kappa)$-Lipschitz continuous \cite[Lemma 1]{franci2020fbf} and $\opV_{2}$ is $\ell_{2}=(\kappa+|\bA|)$-Lipschitz continuous. Thus, $\opV$ is $\ell_{1}+\ell_{2}=\ell_{V}$-Lipschitz continuous.\\
	(b) See \cite[Lemma 5]{YiPav19}, \cite[Lemma 1]{franci2020fbf}.
\end{proof}

Consequently, we transform the KKT conditions in \eqref{KKT_VI} into the form of a distributed splitting, and the VE of the original game can be obtained by searching for the zeros of $\opV+\opT$ (cf. \cite[Theorem 2]{YiPav19}~or~\cite[Lemma 3]{franci2020fbf}).
\begin{proposition}
	The set 
    $$
    \zer(\opV+\opT):=\{\xbold\in\setX\vert 0\in \opV(\xbold)+\opT(\xbold)\}
    $$
    is equivalent to the VE of the game \eqref{eq:game}. $\hfill \Box$
\end{proposition}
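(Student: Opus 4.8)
The plan is to prove the equivalence by reducing the inclusion $0\in\opV(\xbold)+\opT(\xbold)$ to the KKT system \eqref{KKT_VI}, which has already been shown to characterize the solutions of VI$(\setC,F)$ \eqref{eq:VI} and hence the VE of \eqref{eq:game}. Here ``equivalent'' is understood in the natural sense that the projection of $\zer(\opV+\opT)\subseteq\setX$ onto the $\ubold$-coordinate coincides with the VE set, i.e.\ $\ubold^\ast$ is a VE if and only if there exist $p,y$ with $(\ubold^\ast,p,y)\in\zer(\opV+\opT)$. Writing $\xbold=(\ubold,p,y)$ and unpacking \eqref{eq:V}--\eqref{eq:T}, the inclusion splits into the three block conditions
\begin{align*}
	\0 &\in F(\ubold)+\bA^{\top}y+G(\ubold),\\
	\0 &= \bar{\bL}y,\\
	\0 &\in \bar{b}+\bar{\bL}(y-p)-\bA\ubold+\op{N}_{\R^{mN}_{\geq 0}}(y).
\end{align*}

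For the forward direction, the first step is to exploit Assumption \ref{ass:graph}: since the graph induced by $\bW$ is connected, $\ker\bL=\op{span}\{\1_N\}$, so $\ker\bar{\bL}=\op{span}\{\1_N\}\otimes\R^{m}$ and the second block $\bar{\bL}y=\0$ is equivalent to dual consensus $y_1=\dots=y_N=:y^\ast$ for some $y^\ast\in\R^m$. Substituting this into the first block reproduces, player by player, the first line of \eqref{KKT_VI} with common multiplier $y^\ast$. The second step handles the third block: selecting $\eta\in\op{N}_{\R^{mN}_{\geq0}}(y)$ realizing the inclusion and premultiplying the resulting equation by $\1_N^\top\otimes\bI_m$ annihilates the Laplacian term, because $\1_N^\top\bL=\0$, while collapsing $\bar{b}$ and $\bA\ubold$ into $b=\sum_i b_i$ and $A\ubold=\sum_i A_i u_i$. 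Using that each $\eta_i\in\op{N}_{\R^m_{\geq0}}(y^\ast)$ and that $\op{N}_{\R^m_{\geq0}}(y^\ast)$ is a convex cone closed under addition, the aggregated relation is exactly the second line $\0\in\op{N}_{\R^m_{\geq0}}(y^\ast)-(A\ubold-b)$ of \eqref{KKT_VI}. Thus every zero of $\opV+\opT$ projects onto a KKT point, hence a VE.

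The converse is where the genuine work lies. Given a VE $\ubold^\ast$ with multiplier $y^\ast$ solving \eqref{KKT_VI}, I would set $y:=\1_N\otimes y^\ast$, so that $\bar{\bL}y=\0$ and the first block holds, and then must exhibit an auxiliary vector $p$ together with selections $\eta_i\in\op{N}_{\R^m_{\geq0}}(y^\ast)$ for which the per-player third block $\0=b_i-(\bar{\bL}p)_i-A_i u_i^\ast+\eta_i$ holds. The delicate point is solvability: the system $\bar{\bL}p=(b_i-A_i u_i^\ast+\eta_i)_i$ has a solution only when its right-hand side lies in $\op{range}\bar{\bL}=(\ker\bar{\bL})^\perp$, i.e.\ when its blocks sum to $\0$. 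This is precisely guaranteed by the aggregate KKT relation $s:=A\ubold^\ast-b\in\op{N}_{\R^m_{\geq0}}(y^\ast)$, which lets me allocate the normal-cone ``mass'' $s$ across the $\eta_i$ (e.g.\ concentrating it in a single player and setting the others to $\0$) so that $\sum_i(b_i-A_i u_i^\ast+\eta_i)=\0$; solving for $p$ in the range of $\bar{\bL}$, with its consensus component free, then completes the construction. I expect this allocation-plus-solvability argument to be the only nontrivial obstacle, the remainder being bookkeeping, and I would close by invoking the already-cited equivalence of \eqref{KKT_VI}, VI$(\setC,F)$, and the VE.
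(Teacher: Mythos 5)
Your proposal is correct. Note, however, that the paper does not actually prove this proposition: it disposes of it by citation, pointing to \cite[Theorem 2]{YiPav19} and \cite[Lemma 3]{franci2020fbf}. What you have written is, in effect, a self-contained reconstruction of the argument behind those citations, and it contains all the right ingredients: (i) connectedness of the graph (Assumption \ref{ass:graph}) gives $\ker\bar{\bL}=\{\1_N\otimes v\mid v\in\R^m\}$, so the middle block forces dual consensus $y=\1_N\otimes y^{\ast}$; (ii) left-multiplying the third block by $\1_N^{\top}\otimes\bI_m$ kills the Laplacian term and, since the normal cone is a convex cone closed under addition, recovers the aggregate complementarity condition $A\ubold^{\ast}-b\in\op{N}_{\R^m_{\geq0}}(y^{\ast})$ in \eqref{KKT_VI}; and (iii) for the converse, the solvability of $\bar{\bL}p=\op{col}\bigl(b_i-A_iu_i^{\ast}+\eta_i\bigr)_i$ hinges on the right-hand side lying in $\op{range}(\bar{\bL})=(\ker\bar{\bL})^{\perp}$, which you secure precisely by allocating the normal-cone element $s=A\ubold^{\ast}-b$ among the $\eta_i$ (concentrating it on one player is fine, since $\0\in\op{N}_{\R^m_{\geq0}}(y^{\ast})$). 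This last step is indeed the only nontrivial obstacle, and it is the same device used in the proof of \cite[Theorem 2]{YiPav19}. You also correctly read ``equivalent'' as equality between the VE set and the $\ubold$-projection of $\zer(\opV+\opT)$, and correctly lean on the paper's preceding discussion (Slater's condition plus convexity) for the two-way passage between \eqref{KKT_VI}, VI$(\setC,F)$ \eqref{eq:VI}, and the VE of \eqref{eq:game}. In short: the paper buys the result by reference; your proof supplies the missing argument, and it is the standard one.
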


As mentioned in \ref{ssec:stochastic}, for any $\x \in \setX$ we have
\begin{equation}
	\opV(\x) = \EE\left[\tilde{\opV}(\x,\xi)\right],
\end{equation}
where $\tilde{\opV}$ is the random version of the operator in \eqref{eq:V}, i.e., 
\begin{align}
	\label{eq:tildeV}
	\tilde{\opV}(\xbold,\xi):&= \left[\begin{array}{c} \tilde{F}(\ubold,\xi)+\bA^{\top}y\\
		\bar{\bL}y\\
		\bar{b}+\bar{\bL}(y-p)-\bA\ubold\end{array}\right].
\end{align}
\begin{algorithm}[t]
	\caption{Distributed Variance-reduced Stochastic FBF (\bf{DVRSFBF})}\label{alg:DVRSFBF}
	For Player $i$:\\
	(1) Let $t = 0, u_i^0 \in\R^{d_{i}} , y_i^0\in \R_{+}^{m}$ and $p_i^0 \in \R^{m}$. Given $T$, $K$.\\
	(2) While $t<T$, \\
	(3) Receive $u_j^t$ for $j \in \scrN_{i}^{A}$, and set $\bar{F}_{i,t} = \tfrac{\sum_{s=1}^{S_t}\tilde{F}_i(u^t_i,u_j^t, \xi^i_{s,t})}{S_t}$. Let $k=0$, $u_{i,0}^t=u_i^t, y_{i,0}^t=y_i^t, p_{i,0}^t=p_i^t$.\\
	(4) While $k<K$, \\
	(5) Receive $u_{j,k}^t$ for $j \in \scrN_{i}^{A}$, $ y_{j,k}^t$ and $p_{j,k}^t$ for $j \in \scrN_{i}^{y}$ and update 
	$$\begin{aligned}
		u^t_{i,k+\frac{1}{2}} &=\prox_{\gamma_{i}g_{i}}[u^t_{i,k}-\gamma_{i}(\bar{F}_{i,t}+A_{i}^{\top} y^t_{i})]\\
		p^t_{i,k+\frac{1}{2}} &=p^t_{i,k}+\sigma_{i} \sum\nolimits_{j} w_{i,j}(y^t_{j}-y^t_{i})\\
		y^t_{i,k+\frac{1}{2}} &=\Pi_{\R^{m}_{\geq 0}}\{y^t_{i,k}+\tau_{i}(A_{i}u^t_{i}-b_{i})\\
        &+\tau_i\sum\nolimits_{j} w_{i,j}[(p^t_{i}-p^t_{j})-(y^t_{i}-y^t_{j})]\}
	\end{aligned}$$
	
	(6) Receive $u^t_{j,k+\frac{1}{2}}$ for $j \in \scrN_{i}^{A}$, $ y^t_{j,k+\frac{1}{2}}$ and $p^t_{j,k+\frac{1}{2}}$ for $j \in \scrN_{i}^{y}$ and update
	$$\begin{aligned}
		u^t_{i,k+1}=&\, u^t_{i,k+\frac{1}{2}}+\gamma_{i}[\tilde{F}_i(u^t_i,u_j^t,\xi^i_{k+\frac{1}{2},t})\\
        &-\tilde{F}_i(u^t_{i,k+\frac{1}{2}},u^t_{j,k+\frac{1}{2}},\xi^i_{k+\frac{1}{2},t})+A_{i}^{\top} (y^t_{i}-y^t_{i,k+\frac{1}{2}})]\\
		p^t_{i,k+1}=&\,p^t_{i,k+\frac{1}{2}}+\sigma_{i} \sum\nolimits_{j} w_{i,j}[(y^t_{i}-y^t_{j})\\
        &-(y^t_{i,k+\frac{1}{2}}-y^t_{j,k+\frac{1}{2}})]\\
		y^t_{i,k+1}=&\,y^t_{i,k+\frac{1}{2}}-\tau_{i}A_i(u^t_{i}-u^t_{i,k+\frac{1}{2}})\\
        &-\tau_i\sum\nolimits_{j \in \mathcal{N}_{i}^{y}} w_{i,j}[(p^t_{i}-p^t_{j})-(p^t_{i,k+\frac{1}{2}}-p^t_{j,k+\frac{1}{2}})]\\
        &+\tau_i\sum\nolimits_{j \in \mathcal{N}_{i}^{y}} w_{i,j}[(y^t_{i}-y^t_{j})-(y^t_{i,k+\frac{1}{2}}-y^t_{j,k+\frac{1}{2}})]
	\end{aligned}$$
	(7) Set $k\coloneqq k+1$ and go to (4).\\/
	(8) Set $u^{t+1}_i\coloneqq u^t_{i,K}$, $p^{t+1}_i\coloneqq p^t_{i,K}$ and $y^{t+1}_i\coloneqq y^t_{i,K}$. Let $t\coloneqq t+1$ and go to (2).
\end{algorithm}

\subsection{Distributed algorithm}
The main contribution of this paper lies in the development of a stochastic variance-reduced distributed algorithm for computing a VE of the SGNE. Contrary to previous studies, our algorithmic design takes inspiration from the machine learning literature, and includes an implicit double-loop scheme with different ways to sample the stochastic operator. In particular, our design translates the SVRG technology of \cite{johnson2013accelerating}, originally designed for finite-sum convex optimization problems, to stochastic operator splitting problems. The main ideas behind the algorithmic design is as follows: Since the operators derived from GNEPs do usually not display the technical cocoercivity condition, an extragradient type algorithm is sought for (see \cite{grammatico2018} for an example on what can go wrong in game dynamics without cocoercivity). The type of extragradient method employed is the forward-backward-forward splitting template of \cite{Tse00}. The next basic design element is the variance reduction technology. Our algorithm features two iteration counters $t$ and $k$; The index $t$ measures the iterations of an ``outer loop'', at which a costly mini-batch Monte-Carlo estimator of the stochastic pseudo-gradient is computed. The batch size of this estimator $S_{t}$ is dynamically adjusted to control the variance. Such variance reduction is especially applicable to simulation-based scenarios, where repeated samples are easy to generate \cite{ByrChiNocWu12,Bot2021,cui2023variance,lei2022distributed}. The iteration counter $k$ is the index of an ``inner loop''. Within this inner loop only single-sample estimators of the game's pseudogradient are determined. This phase of the scheme therefore has low computational costs, assuming that function evaluation is within reasonable computational budget. All these loops contain fully parallel computations in which each agent has its own algorithmic parameters. For the convenience of analysis, we collect all these parameters in the block matrix
\begin{equation}\label{eq:Phi}
	\Phi= \diag(\gamma^{-1},\sigma^{-1},\tau^{-1}).
\end{equation}
The state variable of the algorithm is denoted as $\x^t = (\ubold^t,p^t,y^t)$, where $\x^t$ is used to calculate the stochastic variance-reduced operator
\begin{equation}\label{eq:barV}
	\bar{\opV}(\x^t)=\frac{\Sigma_{j=1}^{S_t}\tilde{\opV}(\x^t,\xi_{j,t})}{S_t}.
\end{equation}

Let $\z^t_k$ be the state variable of the inner steps at iteration $t$ and we have $\z^t_0 = \x^t$. The inner loop follows the form of modified FBF splitting, which can be succinctly expressed as:
\begin{equation}\label{eq:alg}
	\left\{\begin{array}{l}
		\z^t_{k+\frac{1}{2}}= \mathrm{J}_{\Phi^{-1}\opT}(\z^t_k-\Phi^{-1}{\bar{\opV}}(\x^t)), \\
		\z^t_{k+1}= \z^t_{k+\frac{1}{2}}-\Phi^{-1}({\tilde{\opV}}(\z^t_{k+\frac{1}{2}},\xi_{k+\frac{1}{2},t})\\
		\qquad\quad-{\tilde{\opV}}(\x^t,\xi_{k+\frac{1}{2},t})),
	\end{array}\right.
\end{equation}
where $\mathrm{J}_{\Phi^{-1}\opT}= (\Id+ \Phi^{-1}\opT)^{-1}$ represents the resolvent of the operator $\Phi^{-1}\opT$ and $\op{Id}$ denotes the identity operator. The development of DVRSFBF and the convergence of variance reduction schemes are discussed in the next section.

\section{Convergence analysis}	\label{sec:convergence}
\subsection{Background and assumptions}\label{sec:4.1}
We formally define such a variance-reduced splitting scheme next. We begin by introducing suitable monotonicity and Lipschitzian requirements on $\opV$ and $\tilde{\opV}(\bullet,\xi)$, respectively. 
	
	\begin{assumption}\label{ass:lipmon} \em 		
		(a) The mapping ${\tilde{\opV}}(\bullet,\xi)$ is $L(\xi)$-Lipschitz continuous, where $L(\xi)\in L^{2}(\Omega,\PP)$
		
		(b) The mapping $\opV$ is monotone and single-valued on $\R^n$;
		
		(c) The mapping $\opT$ is maximal monotone on $\R^n$.
	\end{assumption}
	\begin{remark}
		As Assumptions \ref{ass:convex}, \ref{ass:X}, \ref{ass:GM} and \ref{ass:graph} hold, Assumption \ref{ass:lipmon}(b) and (c) can be deduced from Lemma \ref{lemma_op}.
	\end{remark}
	
	We now formally define the information structure. 
	\begin{definition}
		\noindent {For any $t \, \geq \, 0$}, suppose $\mathcal{F}_t$ denotes the history up to iteration $t$, where $\mathcal{F}_0  \triangleq \sigma(\x^0)$ and 
		\begin{align*} 
			\mathcal{F}_t & \triangleq
			\sigma\left(\x^0,\{\xi_{j,0}\}_{j=1}^{S_0},\{\xi_{k+\frac{1}{2},0}\}_{k=0}^{K-1}, \cdots, \{\xi_{j,t-1}\}_{j=1}^{S_{t-1}},\right.\\ &\left.\{\xi_{k+\frac{1}{2},t-1}\}_{k=0}^{K-1}\right).
		\end{align*}
		We denote the history at outer iteration $t \geq 0$ and inner iteration $K \geq k \ge 0$  as $\mathcal{F}_{k,t}$ and is defined as  follows.
		\begin{align*} 		
        &\mathcal{F}_{0,t} \, \triangleq \, \sigma(\mathcal{F}_t \, \cup \,  \sigma(\left\{\xi_{j,t}\right\}_{j=1}^{S_{t}}))\mbox{ and }\\
			&\mathcal{F}_{k,t} \,  \triangleq\,  \sigma\left(\mathcal{F}_t \, \cup \,
			\sigma\left(\left\{ \{\xi_{j,t}\}_{j=1}^{S_{t}}, \xi_{\frac{1}{2},t},\cdots,\xi_{k-\frac{1}{2},t} \right\}\right)\right).
		\end{align*}
	\end{definition}
	
	\medskip

	In our proposed variant of the SVRG framework, an average gradient $\bar{\opV}$ is calculated to approximate the expectation-valued $\opV$. We impose the following bias and moment assumptions on the stochastic error
    \begin{equation}\label{eq:def_eps}
        \eps_{k,t}(x)\triangleq\tilde{\opV}(x,\xi_{k,t})-\opV(x).
    \end{equation}
	\begin{assumption}\label{moment_biased} \em Let $\left\{\x^t\right\}$ be the stochastic process generated by ({\bf DVRSFBF}). Suppose $\{\xi_{j,t}\}_{j=1}^{S_{t}}$ are i.i.d samples for $t\geq 0$. For all $t, k$ and $j \geq 0$, there exist scalars $\nu, b > 0$ such that the following conditions apply: 
		
		(a)  The conditional means $\| \mathbb{E}[\eps_{j,t}(\x^t)  \mid  {\cal F}_t ] \|  \le \frac{b}{\sqrt{S_t}}$, $\| \mathbb{E}[\eps_{k+\frac{1}{2},t}(\z^t_{k+\frac{1}{2}}) \mid {\cal F}_{k,t} ] \|  \le \frac{b}{\sqrt{S_t}},\, \| \mathbb{E}[\eps_{k+\frac{1}{2},t}(\x^t) \mid {\cal F}_{k,t} ] \|  \le \frac{b}{\sqrt{S_t}}$ a.s.;
		
		(b) The conditional second moments $\mathbb{E}[\|\eps_{j,t}(\x^t)\|^2\mid  {\cal F}_t] \le \nu^2,\, \mathbb{E}[\|\eps_{k+\frac{1}{2},t}(\z^t_{k+\frac{1}{2}})\|^2 \mid {\cal F}_{k,t} ] \le \nu^2,\,  \mathbb{E}[\|\eps_{k+\frac{1}{2},t}(\x^t)\|^2 \mid {\cal F}_{k,t} ] \le \nu^2$, a.s .
	\end{assumption}
	
	Note that the first assumption is a weakening of the standard conditional unbiasedness assumption popular in stochastic optimization. Our results hold under a weaker notion of asymptotically unbiased oracles. This requirement is rather mild and is imposed in many simulation-based optimization schemes.
	We conclude this subsection with the Robbins-Siegmund Lemma crucial for proving a claim of almost sure convergence in strongly monotone regimes.
	
	\begin{lemma}\cite[Lemma 10]{Pol87}
    \label{as_recur}\em Let $\{v_t\}$ be a sequence of nonnegative random variables adapted to the $\sigma$-algebra ${\cal F}_t$ such that for $t \geq 0$,  
		\begin{align}
			\mathbb{E}\left[\, v_{t+1} \, \mid \, {\cal F}_t \, \right] \, \leq \, (1-u_t) v_t + \beta_t, \mbox{ a.s. } 
		\end{align}
		where $0 \leq u_t \leq 1$, $\beta_{t}\geq 0$, $\sum_{t=0}^{\infty} u_t = \infty$, $\sum_{t=0}^{\infty} \beta_t < \infty$, and $\lim_{t \to \infty} \tfrac{\beta_t}{u_t} = 0$. Then $v_t \xrightarrow{t \to \infty} 0$ almost surely.
	\end{lemma}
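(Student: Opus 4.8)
The plan is to decouple the two forces at work in the recursion: the additive perturbation ``$+\beta_t$'', which is summable and will be tamed by a supermartingale argument to give almost sure convergence of $v_t$ to \emph{some} limit, and the contraction factor ``$(1-u_t)$'', which together with $\sum_t u_t=\infty$ will force that limit to be zero. I would handle the sample paths and the means on separate tracks and then glue them with Fatou's lemma. Throughout I use the implicit integrability $\mathbb{E}[v_0]<\infty$, which propagates to every $t$ since the hypothesis already yields $\mathbb{E}[v_{t+1}\mid\mathcal{F}_t]\le v_t+\beta_t$.

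\emph{Step 1 (almost sure convergence of the iterates).} Discarding the nonpositive term $-u_tv_t$ leaves the supermartingale-type bound $\mathbb{E}[v_{t+1}\mid\mathcal{F}_t]\le v_t+\beta_t$. I would compensate the drift by setting $S_t\triangleq v_t+\sum_{s\ge t}\beta_s$, which is finite because $\sum_t\beta_t<\infty$; a one-line computation then gives $\mathbb{E}[S_{t+1}\mid\mathcal{F}_t]\le S_t$, so $\{S_t\}$ is a nonnegative supermartingale and converges almost surely by the martingale convergence theorem. Since the tail $\sum_{s\ge t}\beta_s\to 0$, this transfers to $v_t\to v_\infty$ a.s.\ for some $v_\infty\ge 0$.

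\emph{Step 2 (the means vanish).} Taking total expectations turns the hypothesis into the deterministic recursion $a_{t+1}\le(1-u_t)a_t+\beta_t$ with $a_t\triangleq\mathbb{E}[v_t]$. Fixing $\epsilon>0$, the assumption $\beta_t/u_t\to 0$ produces $T$ with $\beta_t\le\epsilon u_t$ for $t\ge T$, so $b_t\triangleq a_t-\epsilon$ obeys $b_{t+1}\le(1-u_t)b_t$. A short case split finishes this: once $b_t\le 0$ it stays nonpositive, whereas if $b_t>0$ for all $t\ge T$ then $b_t\le b_T\prod_{s=T}^{t-1}(1-u_s)\le b_T\exp(-\sum_{s=T}^{t-1}u_s)\to 0$ by $\sum_t u_t=\infty$. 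Either way $\limsup_t a_t\le\epsilon$, and since $\epsilon$ is arbitrary and $a_t\ge 0$ we obtain $\mathbb{E}[v_t]\to 0$.

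\emph{Step 3 (gluing).} Combining the two steps through Fatou's lemma gives $\mathbb{E}[v_\infty]\le\liminf_t\mathbb{E}[v_t]=0$, and nonnegativity of $v_\infty$ then forces $v_\infty=0$ almost surely, which is the claim. I expect the conceptual crux to be exactly this final gluing: neither the almost sure convergence of $v_t$ on its own nor the $L^1$-convergence of the means on its own identifies the limit, and the hypotheses $\sum_t u_t=\infty$ and $\beta_t/u_t\to 0$ earn their keep in Step 2 precisely by driving the means to zero so that Fatou can pin the almost sure limit at zero. The only genuine bookkeeping hazard is the case analysis in Step 2, together with carefully tracking integrability from $\mathbb{E}[v_0]<\infty$ so that the conditional-expectation manipulations in Step 1 are legitimate.
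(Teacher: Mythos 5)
Your proof is correct, but be aware that the paper contains no internal proof to compare against: this lemma is quoted as a black box from Polyak \cite[Lemma 10]{Pol87}. Your three-step route --- (i) compensating the summable perturbation so that $S_t = v_t + \sum_{s\ge t}\beta_s$ is a nonnegative supermartingale, hence $v_t \to v_\infty$ a.s.; (ii) driving the means $a_t=\mathbb{E}[v_t]$ to zero via the deterministic recursion, where $\sum_t u_t=\infty$ and $\beta_t/u_t\to 0$ do their work; (iii) pinning $v_\infty=0$ by Fatou --- is sound, and the delicate points (the case split in Step 2, which works because $0\le 1-u_t\le 1$ keeps $b_t$ nonpositive once it becomes so) are handled correctly. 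The classical argument behind Polyak's lemma is different and slightly slicker: writing the hypothesis as $\mathbb{E}[v_{t+1}\mid\mathcal{F}_t]\le v_t - u_tv_t+\beta_t$ and applying the Robbins--Siegmund theorem gives both $v_t\to v_\infty$ a.s. and $\sum_t u_tv_t<\infty$ a.s., whereupon $\sum_t u_t=\infty$ forces $\liminf_t v_t=0$ and hence $v_\infty=0$, without ever passing through expectations. Your version is more elementary (Doob's supermartingale convergence plus calculus plus Fatou), but it has two hypotheses that the pathwise argument does not need and that you should state explicitly: integrability $\mathbb{E}[v_0]<\infty$ (you flag this; it is harmless in the paper's application where $\mathcal{F}_0=\sigma(\x^0)$ for a given initial point, and can be removed in general by localizing on the events $\{v_0\le M\}$), and the requirement that $u_t,\beta_t$ be deterministic sequences, since taking total expectations in Step 2 would not commute with a random contraction factor $(1-u_t)$; this matches how the lemma is stated and used in the paper, but it is a genuine restriction relative to the Robbins--Siegmund formulation.
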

	
	\subsection{Convergence analysis under strong monotonicity} \label{sec:4.2}
	In this subsection, we derive a.s. convergence guarantees and rate statements.
	\begin{lemma}\label{egt} \em
		Consider a sequence $\{\x^t\}$ generated by ({\bf DVRSFBF}). Let Assumptions~\ref{ass:convex}-\ref{moment_biased} hold. Suppose that the operator $\opV$ is $\mu$-strongly monotone. {Then there exist $q < 1, \rho < 1-q$, and $0<\mathsf{c}<\frac{\mu}{3}$ such that for any $t \ge 0$, 
		\begin{align} \label{rec:cond_exp_smon}
			\notag\mathbb{E}[\|\x^{t+1}-\x^*\|^2_\Phi&\mid \mathcal{F}_t] \le \left(q^{K}+\frac{1-q^{K}}{1-q}\rho\right)\|\x^{t}-\x^*\|^2_\Phi\\
			&+\frac{1-q^{K}}{1-q} \left( \frac{3b^2+\nu^2}{\mathsf{c}S_t} \right).
		\end{align}}
	\end{lemma}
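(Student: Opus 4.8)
The plan is to reduce the outer recursion \eqref{rec:cond_exp_smon} to a single contraction estimate for one inner FBF step in \eqref{eq:alg}, and then to unroll that estimate over $k=0,\dots,K-1$ using $\z^t_0=\x^t$ and $\x^{t+1}=\z^t_K$. First I would analyze one inner step. From the resolvent step, $\z^t_{k+\frac12}=\mathrm{J}_{\Phi^{-1}\opT}(\z^t_k-\Phi^{-1}\bar\opV(\x^t))$ gives the inclusion $\Phi(\z^t_k-\z^t_{k+\frac12})-\bar\opV(\x^t)\in\opT(\z^t_{k+\frac12})$. Since $\x^*\in\zer(\opV+\opT)$ means $-\opV(\x^*)\in\opT(\x^*)$, monotonicity of $\opT$ yields $\inner{\Phi(\z^t_k-\z^t_{k+\frac12})-\bar\opV(\x^t)+\opV(\x^*),\z^t_{k+\frac12}-\x^*}\ge0$. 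I would then expand $\norm{\z^t_{k+1}-\x^*}_\Phi^2$ using the correction step $\z^t_{k+1}=\z^t_{k+\frac12}-\Phi^{-1}d_k$ with $d_k\triangleq\tilde\opV(\z^t_{k+\frac12},\xi_{k+\frac12,t})-\tilde\opV(\x^t,\xi_{k+\frac12,t})$, together with the three-point identity linking $\z^t_{k+\frac12}$ and $\z^t_k$. Combining these makes the SVRG surrogate $\hat V=\bar\opV(\x^t)+d_k$ appear, which I rewrite as $\opV(\z^t_{k+\frac12})+e_k$ with $e_k=\eps_{k+\frac12,t}(\z^t_{k+\frac12})-\eps_{k+\frac12,t}(\x^t)+\frac1{S_t}\sum_{j}\eps_{j,t}(\x^t)$, i.e. the variance-reduced estimate of $\opV(\z^t_{k+\frac12})$.

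Next, $\mu$-strong monotonicity applied to $\inner{\opV(\z^t_{k+\frac12})-\opV(\x^*),\z^t_{k+\frac12}-\x^*}\ge\mu\norm{\z^t_{k+\frac12}-\x^*}^2$ produces the negative term that drives the contraction, while the correction term $\norm{d_k}_{\Phi^{-1}}^2$ is bounded via $L(\xi)$-Lipschitzness (Assumption \ref{ass:lipmon}(a), with $\EE[L(\xi)^2]<\infty$) and $\norm{\z^t_{k+\frac12}-\x^t}^2\le2\norm{\z^t_{k+\frac12}-\z^t_k}^2+2\norm{\z^t_k-\x^t}^2$. The step sizes (i.e. $\Phi$ via $\gamma_i,\sigma_i,\tau_i$) must be chosen small enough that the term $-\norm{\z^t_k-\z^t_{k+\frac12}}_\Phi^2$ absorbs the induced drift, while the anchor drift $\norm{\z^t_k-\x^t}^2\le2\norm{\z^t_k-\x^*}^2+2\norm{\x^t-\x^*}^2$ feeds both the $q$- and the $\rho$-coefficient.

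For the error I take $\EE[\cdot\mid\mathcal{F}_{k,t}]$: the inner-loop errors $\eps_{k+\frac12,t}$ split into a conditional mean bounded by $b/\sqrt{S_t}$ (Assumption \ref{moment_biased}(a)), absorbed by Young's inequality against a fraction of $\mu\norm{\z^t_{k+\frac12}-\x^*}^2$, and a zero-mean part whose inner product with the $\mathcal{F}_{k,t}$-measurable $\z^t_{k+\frac12}-\x^*$ vanishes. The mini-batch error $\frac1{S_t}\sum_j\eps_{j,t}(\x^t)$ is $\mathcal{F}_{k,t}$-measurable, so I keep it and take a further $\EE[\cdot\mid\mathcal{F}_t]$, splitting $\z^t_{k+\frac12}-\x^*=(\z^t_{k+\frac12}-\x^t)+(\x^t-\x^*)$ to decouple its correlation with $\z^t_{k+\frac12}$; its bias ($\le b/\sqrt{S_t}$) and its variance ($\le\nu^2/S_t$ from averaging $S_t$ i.i.d. terms, Assumption \ref{moment_biased}(b)) yield the $O((b^2+\nu^2)/S_t)$ residual. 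Gathering the three bias contributions and the mini-batch variance gives precisely $\frac{3b^2+\nu^2}{\mathsf{c}S_t}$, where $\mathsf{c}<\mu/3$ is the aggregate Young's constant, the strong-monotonicity budget $\mu$ being spent across the three error sources. This delivers the per-step estimate $\EE[\norm{\z^t_{k+1}-\x^*}_\Phi^2\mid\mathcal{F}_t]\le q\,\EE[\norm{\z^t_k-\x^*}_\Phi^2\mid\mathcal{F}_t]+\rho\norm{\x^t-\x^*}_\Phi^2+\frac{3b^2+\nu^2}{\mathsf{c}S_t}$ with $q<1$ and $\rho<1-q$.

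Finally, writing $a_k\triangleq\EE[\norm{\z^t_k-\x^*}_\Phi^2\mid\mathcal{F}_t]$ and $\eta\triangleq\rho\norm{\x^t-\x^*}_\Phi^2+\frac{3b^2+\nu^2}{\mathsf{c}S_t}$, the per-step estimate reads $a_{k+1}\le q\,a_k+\eta$; iterating from $a_0=\norm{\x^t-\x^*}_\Phi^2$ and summing $\sum_{j=0}^{K-1}q^j=\frac{1-q^K}{1-q}$ gives $a_K\le q^K\norm{\x^t-\x^*}_\Phi^2+\frac{1-q^K}{1-q}\eta$, which is exactly \eqref{rec:cond_exp_smon}. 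I expect the hard part to be the coupled constant-chasing in the previous step: choosing $\Phi$ so that $-\norm{\z^t_k-\z^t_{k+\frac12}}_\Phi^2$ simultaneously absorbs every drift, Lipschitz, and Young term, converting the half-step term $-\mu\norm{\z^t_{k+\frac12}-\x^*}^2$ into a genuine contraction in $\norm{\z^t_k-\x^*}_\Phi^2$ (using norm equivalence $\lambda_{\min}(\Phi)\norm{\cdot}^2\le\norm{\cdot}_\Phi^2\le\lambda_{\max}(\Phi)\norm{\cdot}^2$), and keeping the anchor-drift coefficient small enough that $\rho<1-q$ — which, through the geometric sum, is exactly what guarantees the outer factor $q^K+\frac{1-q^K}{1-q}\rho<1$.
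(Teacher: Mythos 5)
Your proposal is correct and follows essentially the same route as the paper's own proof: the same single-inner-step analysis (resolvent inclusion plus monotonicity of $\opT$ at $\x^*$, three-point expansion of the correction step, $\mu$-strong monotonicity of $\opV$ at the half-step, Lipschitz bound on the correction term), the same error decomposition into two inner-sample biases plus the mini-batch error handled by Young's inequality with the constant $\mathsf{c}<\mu/3$ yielding $\tfrac{3b^2+\nu^2}{\mathsf{c}S_t}$, and the same geometric unrolling $a_{k+1}\le q\,a_k+\rho\norm{\x^t-\x^*}_\Phi^2+\tfrac{3b^2+\nu^2}{\mathsf{c}S_t}$ over $k=0,\dots,K-1$. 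The only differences are cosmetic constants (your $2$-$2$ splitting of $\norm{\z^t_{k+\frac12}-\x^t}^2$ versus the paper's direct three-term inequality $\norm{u+v+w}^2\le3\norm{u}^2+3\norm{v}^2+3\norm{w}^2$, which produces the paper's exact $q=1-\alpha(\mu-3\mathsf{c})+3\alpha^2\hat{L}^2$ and $\rho=3\alpha^2\hat{L}^2$).
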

	\begin{proof}
		From the definition of $\z^t_{k+\frac{1}{2}}$, we have
		\begin{align*} 
			\z^t_{k+\frac{1}{2}}+\Phi^{-1} \vv^t_{k+\frac{1}{2}} & =\z^t_k-\Phi^{-1}(\opV(\x^t)+\bar{\epsilon}_t),
		\end{align*}
		where $\bar{\epsilon}_t \, = \, \bar{\opV}(\x^t) - \opV(\x^t)$ and $\vv^t_{k+\frac{1}{2}} \in \opT(\z^t_{k+\frac{1}{2}})$. Since $0 \in \opV(\x^*)+\opT(\x^*)$, there exists $\vv^* \in \opT(\x^*)$ such that
		\begin{align} \label{df2}
			\opV(\x^*)+\vv^*=0.
		\end{align}
		From $\z^t_{k+1} = \z^t_{k+\frac{1}{2}}-\Phi^{-1}({\tilde{\opV}}(\z^t_{k+\frac{1}{2}},\xi_{k+\frac{1}{2},t})-{\tilde{\opV}}(\x^t,\xi_{k+\frac{1}{2},t}))$, we obtain 
        \begin{align*}
            \|\z^t_k&-\x^*\|^2_\Phi=\|\z^t_k-\z^t_{k+\frac{1}{2}}\|^2_\Phi+\|\z^t_{k+\frac{1}{2}}-\z^t_{k+1}\|^2_\Phi\\
                &~+\|\z^t_{k+1}-\x^*\|^2_\Phi-2\|\z^t_{k+\frac{1}{2}}-\z^t_{k+1}\|^2_\Phi\\
			&~+2\inner{\z^t_k-\z^t_{k+\frac{1}{2}},\z^t_{k+\frac{1}{2}}-\x^*}_\Phi\\
                &~+2\inner{\z^t_{k+\frac{1}{2}}-\z^t_{k+1},\z^t_{k+\frac{1}{2}}-\x^*}_\Phi\\
                &=\|\z^t_k-\z^t_{k+\frac{1}{2}}\|^2_\Phi-\|\z^t_{k+\frac{1}{2}}-\z^t_{k+1}\|^2_\Phi\\
                &~+2\inner{\z^t_k-\z^t_{k+1},\z^t_{k+\frac{1}{2}}-\x^*}_\Phi+\|\z^t_{k+1}-\x^*\|^2_\Phi.
		\end{align*}
        Hence, 
        \begin{align}\label{eq18}
        \|&\z^t_k-\x^*\|^2_\Phi=\|\z^t_k-\z^t_{k+\frac{1}{2}}\|^2_\Phi\notag\\
        &~ -\|{\tilde{\opV}}(\z^t_{k+\frac{1}{2}},\xi_{k+\frac{1}{2},t})-{\tilde{\opV}}(\x^t,\xi_{k+\frac{1}{2},t})\|^2_{\Phi^{-1}}\notag\\
	&~+\|\z^t_{k+1}-\x^*\|^2_\Phi +2\inner{\vv^{t}_{k+1/2}, \z^t_{k+\frac{1}{2}}-\x^*}\notag\\
        &~+ 2\inner{{\tilde{\opV}}(\z^t_{k+\frac{1}{2}},\xi_{k+\frac{1}{2},t})-{\tilde{\opV}}(\x^t,\xi_{k+\frac{1}{2},t}), \z^t_{k+\frac{1}{2}}-\x^*}\notag\\
        &~+2\inner{\opV(\x^t)+\bar{\epsilon}_t, \z^t_{k+\frac{1}{2}}-\x^*}.
        \end{align}
        Since $\tilde{\opV}(\bullet,\xi)$ is $L(\xi)$-Lipschitz continuous, we obtain the following in regards to the $\Phi^{-1}$-operator norm:
        \begin{align*}
          \|{\tilde{\opV}}(\z^t_{k+\frac{1}{2}},&\xi_{k+\frac{1}{2},t})-{\tilde{\opV}}(\x^t,\xi_{k+\frac{1}{2},t})\|^2_{\Phi^{-1}}\\ 
          &\leq (L({\xi_{k+\frac{1}{2},t}))}^2\lambda_{\text{max}}(\Phi^{-1})^2 \|\z^t_{k+\frac{1}{2}}-\x^t\|^2_{\Phi}\\
          &=(L_{\|\cdot\|_{\Phi^{-1}}}(\xi_{k+\frac{1}{2},t}))^2\|\z^t_{k+\frac{1}{2}}-\x^t\|^2_{\Phi},
        \end{align*}
        where $\lambda_{\text{max}}(\Phi^{-1})$ denotes the largest eigenvalue of $\Phi^{-1}$ and $L_{\|\cdot\|_{\Phi^{-1}}}(\xi)\triangleq L(\xi)\lambda_{\text{max}}(\Phi^{-1})$.
		Following \eqref{eq18}, we deduce 
		\begin{align}
			\notag \|&\z^t_{k+1}-\x^*\|^2_\Phi \le\|\z^t_k-\x^*\|^2_\Phi-\|\z^t_k-\z^t_{k+\frac{1}{2}}\|^2_\Phi\\
                \notag&+{(L_{\|\cdot\|_{\Phi^{-1}}}({\xi_{k+\frac{1}{2},t}))}^2}\|\z^t_{k+\frac{1}{2}}-\x^t\|^2_{\Phi}\\
			\notag& -2\inner{\vv^t_{k+\frac{1}{2}}+\opV(\z^t_{k+\frac{1}{2}}),\z^t_{k+\frac{1}{2}}-\x^*}\\
			\notag&-2\inner{{\tilde{\opV}}(\z^t_{k+\frac{1}{2}},\xi_{k+\frac{1}{2},t})-{\tilde{\opV}}(\x^t,\xi_{k+\frac{1}{2},t})-\opV(\z^t_{k+\frac{1}{2}})+\bar{\epsilon}_t\\
			&+\opV(\x^t),\z^t_{k+\frac{1}{2}}-\x^*}. \label{eq29}
		\end{align}
		Since $0 \in \opV(\x^*)+\opT(\x^*)$, the following bound holds by strong monotonicity of $\opV$ and \eqref{df2}:
		\begin{align}
			\inner{\vv^t_{k+\frac{1}{2}}+\opV(\z^t_{k+\frac{1}{2}}),\z^t_{k+\frac{1}{2}}-\x^*} \ge \mu\|\z^t_{k+\frac{1}{2}}-\x^*\|^2. \label{eq19}
		\end{align}
		Substituting \eqref{eq19} into \eqref{eq29}, we obtain
		\begin{align*}
			\|&\z^t_{k+1}-\x^*\|^2_\Phi \le\|\z^t_k-\x^*\|^2_\Phi-\|\z^t_k-\z^t_{k+\frac{1}{2}}\|^2_\Phi\\
                &+{(L_{\|\cdot\|_{\Phi^{-1}}}({\xi_{k+\frac{1}{2},t}))^2}}\|\z^t_{k+\frac{1}{2}}-\x^t\|^2_{\Phi}-2\mu\|\z^t_{k+\frac{1}{2}}-\x^*\|^2\\
			&-2\inner{{\tilde{\opV}}(\z^t_{k+\frac{1}{2}},\xi_{k+\frac{1}{2},t})-{\tilde{\opV}}(\x^t,\xi_{k+\frac{1}{2},t})+\opV(\x^t)\\
			&-\opV(\z^t_{k+\frac{1}{2}})+\bar{\epsilon}_t,\z^t_{k+\frac{1}{2}}-\x^*}.
		\end{align*}
		The next inequalities follow from $\|u+v+w\|^2 \le 3\|u\|^2+3\|v\|^2+3\|w\|^2$ {and $-2\|u\|^2 \leq -\|u+v\|^2 + 2\|v\|^2.$}
		\begin{align*}
			\|&\z^t_{k+1}-\x^*\|^2_\Phi  \le \|\z^t_k-\x^*\|^2_\Phi-\|\z^t_k-\z^t_{k+\frac{1}{2}}\|^2_\Phi\\
                &~+3(L_{\|\cdot\|_{\Phi^{-1}}}(\xi_{k+\frac{1}{2},t}))^2\|\z^t_{k}-\x^*\|^2_{\Phi}-2\mu\|\z^t_{k+\frac{1}{2}}-\x^*\|^2\\
                &~+3(L_{\|\cdot\|_{\Phi^{-1}}}(\xi_{k+\frac{1}{2},t}))^2\|\x^t-\x^*\|^2_{\Phi}\\
                &~+3(L_{\|\cdot\|_{\Phi^{-1}}}(\xi_{k+\frac{1}{2},t}))^2\|\z^t_k-\z^t_{k+\frac{1}{2}}\|^2_{\Phi}\\
			&~-2\inner{{\tilde{\opV}}(\z^t_{k+\frac{1}{2}},\xi_{k+\frac{1}{2},t})-{\tilde{\opV}}(\x^t,\xi_{k+\frac{1}{2},t})+\opV(\x^t)\\
                &~-\opV(\z^t_{k+\frac{1}{2}})+\bar{\epsilon}_t,\z^t_{k+\frac{1}{2}}-\x^*}\\           
                &\le \|\z^t_k-\x^*\|^2_\Phi+3(L_{\|\cdot\|_{\Phi^{-1}}}(\xi_{k+\frac{1}{2},t}))^2\|\z^t_{k}-\x^*\|^2_{\Phi}\\
			&~-\|\z^t_k-\z^t_{k+\frac{1}{2}}\|^2_\Phi+3(L_{\|\cdot\|_{\Phi^{-1}}}(\xi_{k+\frac{1}{2},t}))^2\|\x^t-\x^*\|^2_{\Phi}\\
			&~ +3(L_{\|\cdot\|_{\Phi^{-1}}}(\xi_{k+\frac{1}{2},t}))^2\|\z^t_k-\z^t_{k+\frac{1}{2}}\|^2_{\Phi}\\
			&~-2\inner{{\tilde{\opV}}(\z^t_{k+\frac{1}{2}},\xi_{k+\frac{1}{2},t})-{\tilde{\opV}}(\x^t,\xi_{k+\frac{1}{2},t})+\opV(\x^t)-\opV(\z^t_{k+\frac{1}{2}})\\
                &~+\bar{\epsilon}_t,\z^t_{k+\frac{1}{2}}-\x^*}-\mu\|\z^t_k-\x^*\|^2+2\mu\|\z^t_{k+\frac{1}{2}}-\z^t_k\|^2.
		\end{align*}
		We define $\alpha\ \triangleq \lambda_{\text{max}}(\Phi^{-1})$ and the above inequality becomes
		\begin{align*}
			\|&\z^t_{k+1}-\x^*\|^2_\Phi \le (1-\alpha\mu+3(L_{\|\cdot\|_{\Phi^{-1}}}(\xi_{k+\frac{1}{2},t}))^2)\|\z^t_k-\x^*\|^2_\Phi\\
			&-(1-2\alpha\mu-3(L_{\|\cdot\|_{\Phi^{-1}}}(\xi_{k+\frac{1}{2},t}))^2)\|\z^t_k-\z^t_{k+\frac{1}{2}}\|^2_\Phi\\
			&+3(L_{\|\cdot\|_{\Phi^{-1}}}(\xi_{k+\frac{1}{2},t}))^2\|\x^t-\x^*\|^2_{\Phi} -2\inner{\opV(\z^t_{k+\frac{1}{2}},\xi_{k+\frac{1}{2},t})\\
			&-\opV(\x^t,\xi_{k+\frac{1}{2},t})+\opV(\x^t) -\opV(\z^t_{k+\frac{1}{2}})+\bar{\epsilon}_t,\z^t_{k+\frac{1}{2}}-\x^*}.
		\end{align*}
		Taking expectations on both sides {conditioned on $\mathcal{F}_t$}, invoking the tower law of conditional expectation and using \ref{ass:lipmon}(a) we obtain the following inequality.
		\begin{align*}
			&\mathbb{E}\left[\, \|\z^t_{k+1}-\x^*\|^2_\Phi \, \mid \, \mathcal{F}_t\, \right]\\
                &\le \mathbb{E}\left[ (1-\alpha\mu+3{L_{\|\cdot\|_{\Phi^{-1}}}(\xi_{k+\frac{1}{2},t})}^2)\|\z^t_k-\x^*\|^2_\Phi\mid \, \mathcal{F}_t \, \right]\\
			    &~-\mathbb{E}\left[ (1-2\alpha\mu-3L_{\|\cdot\|_{\Phi^{-1}}}(\xi_{k+\frac{1}{2},t})^2)\|\z^t_k-\z^t_{k+\frac{1}{2}}\|^2_\Phi\mid \, \mathcal{F}_t \, \right]\\
                &~-2\mathbb{E}\left[\, \inner{{\tilde{\opV}}(\z^t_{k+\frac{1}{2}},\xi_{k+\frac{1}{2},t})-\opV(\z^t_{k+\frac{1}{2}}),\z^t_{k+\frac{1}{2}} -\x^{*}}\, \mid \, \mathcal{F}_t\, \right]\\
			&~ -2\mathbb{E}\left[\inner{{\tilde{\opV}}(\x^t,\xi_{k+\frac{1}{2},t})-\opV(\x^t),\z^t_{k+\frac{1}{2}}-\x^*} \mid \, \mathcal{F}_t\, \right]\\
			&~-2\mathbb{E}\left[\, \inner{\bar{\epsilon}_t,\z^t_{k+\frac{1}{2}}-\x^*}\, \mid \, \mathcal{F}_t\, \right]\\ 
                &~+3L_{\|\cdot\|_{\Phi^{-1}}}(\xi_{k+\frac{1}{2},t})^2\|\x^t-\x^*\|^2_{\Phi}\\
			&\le\mathbb{E}\left[ (1-\alpha\mu+3{L_{\|\cdot\|_{\Phi^{-1}}}(\xi_{k+\frac{1}{2},t})}^2)\|\z^t_k-\x^*\|^2_\Phi\, \mid \, \mathcal{F}_t \, \right]\\
                &~-\mathbb{E}\left[ (1-2\alpha\mu-3L_{\|\cdot\|_{\Phi^{-1}}}(\xi_{k+\frac{1}{2},t})^2)\|\z^t_k-\z^t_{k+\frac{1}{2}}\|^2_\Phi\mid \, \mathcal{F}_t \, \right]\\
			&~+2\mathbb{E}\left[\left\|\mathbb{E}\left[ \tilde{\opV}(\z^t_{k+\frac{1}{2}},\xi_{k+\frac{1}{2},t})-\opV(\z^t_{k+\frac{1}{2}}) \mid  \mathcal{F}_{k,t}\right]\right\|\mid  \mathcal{F}_t\right]\\
                &~\cdot \mathbb{E}\left[\|\z^t_{k+\frac{1}{2}}-\x^*\| \mid  \mathcal{F}_t\right]+2\mathbb{E}\left[\|\bar{\epsilon}_t\|\|\z^t_{k+\frac{1}{2}}-\x^*\|\, \mid \, \mathcal{F}_t\, \right]\\
			&~+2\mathbb{E}\left[\left\|\mathbb{E}\left[{\tilde{\opV}}(\x^t,\xi_{k+\frac{1}{2},t})-\opV(\x^t) \mid  \mathcal{F}_{k,t} \right]\right\|\mid  \mathcal{F}_t \right]\\
                &~\cdot \mathbb{E}\left[\|\z^t_{k+\frac{1}{2}}-\x^*\|\mid  \mathcal{F}_t \right]+3L_{\|\cdot\|_{\Phi^{-1}}}(\xi_{k+\frac{1}{2},t})^2\|\x^t-\x^*\|^2_{\Phi}.\\                
		\end{align*}
        {\begin{align*}
             &\le\mathbb{E}\left[ (1-\alpha\mu+3{L_{\|\cdot\|_{\Phi^{-1}}}(\xi_{k+\frac{1}{2},t})}^2)\|\z^t_k-\x^*\|^2_\Phi\, \mid \, \mathcal{F}_t \, \right]\\
                &~-\mathbb{E}\left[ (1-2\alpha\mu-3L_{\|\cdot\|_{\Phi^{-1}}}(\xi_{k+\frac{1}{2},t})^2)\|\z^t_k-\z^t_{k+\frac{1}{2}}\|^2_\Phi\mid \, \mathcal{F}_t \, \right]\\
			&~+\frac{1}{\mathsf{c}}\mathbb{E}\left[\left\|\mathbb{E}\left[ \tilde{\opV}(\z^t_{k+\frac{1}{2}},\xi_{k+\frac{1}{2},t})-\opV(\z^t_{k+\frac{1}{2}}) \mid  \mathcal{F}_{k,t}\right]\right\|^2\mid  \mathcal{F}_t\right]\\
                &~+ \mathsf{c} \left(\mathbb{E}\left[\|\z^t_{k}-\x^*\|^2 \mid  \mathcal{F}_t\right] + \mathbb{E}\left[\|\z^t_{k+\frac{1}{2}}-\z^t_{k}\|^2 \mid  \mathcal{F}_t\right] \right)\\
			&~+\frac{1}{\mathsf{c}}\mathbb{E}\left[\left\|\mathbb{E}\left[{\tilde{\opV}}(\x^t,\xi_{k+\frac{1}{2},t})-\opV(\x^t) \mid  \mathcal{F}_{k,t} \right]\right\|^2\mid  \mathcal{F}_t \right]\\
                &~+ \mathsf{c}\left(\mathbb{E}\left[\|\z^t_{k}-\x^*\|^2\mid  \mathcal{F}_t \right] + \mathbb{E}\left[\|\z^t_{k+\frac{1}{2}}-\z^t_{k}\|^2\mid  \mathcal{F}_t \right]\right)\\
                &~+ \mathsf{c}\left(\mathbb{E}\left[\|\z^t_{k}-\x^*\|^2\mid  \mathcal{F}_t \right] + \mathbb{E}\left[\|\z^t_{k+\frac{1}{2}}-\z^t_{k}\|^2\mid  \mathcal{F}_t \right]\right)\\
                &~+ \frac{1}{\mathsf{c}}\mathbb{E}\left[\|\bar{\epsilon}_t\|^2\, \mid \, \mathcal{F}_t\, \right] +3L_{\|\cdot\|_{\Phi^{-1}}}(\xi_{k+\frac{1}{2},t})^2\|\x^t-\x^*\|^2_{\Phi},\\
        \end{align*}
        where $\mathsf{c}$ is a positive constant.}
        
		Since $L(\xi)\in L^{2}(\Omega,\PP)$ we set $\mathbb{E}[L(\xi_{k+\frac{1}{2},t})]\triangleq \hat{L}$. {By choosing such a $\mathsf{c}$ that $0<\mathsf{c}<\frac{\mu}{3}$, and an $\alpha$ that $0<\alpha<\min\{\tfrac{\mu-3\mathsf{c}}{6\hat{L}^2},\tfrac{\sqrt{(2\mu+3\mathsf{c})^2+12\hat{L}^2}-(2\mu+3\mathsf{c})}{6\hat{L}^2}\}$} we ensure that {$q \, \triangleq \, (1-\alpha(\mu-3\mathsf{c})+3\alpha^2\hat{L}^2) < 1, \rho \, \triangleq \, 3\alpha^2 \hat{L}^2 < 1-q$, and $(1-\alpha(2\mu+3\mathsf{c})-3\alpha^2\hat{L}^2) > 0$.} Invoking the tower property of the expectation we obtain the following recursion.  
        {
        \begin{align*}
			&\mathbb{E}[\|\z^t_{k+1}-\x^*\|^2_\Phi \, \mid \, \mathcal{F}_t \, ]\\ &\le\underbrace{(1-\alpha(\mu-3\mathsf{c})+3\alpha^2\hat{L}^2) }_q\mathbb{E}\left[\|\z^t_k-\x^*\|^2_\Phi \right.
			\left.\mid \,  \mathcal{F}_t\, \right]\\ &+\underbrace{3\alpha^2\hat{L}^2}_\rho\|\x^t-\x^*\|^2_\Phi+ \left( \frac{3b^2+\nu^2}{\mathsf{c}S_t} \right) \notag \\
			&\le q^{k+1}\|\z^t_0-\x^*\|^2_\Phi+\frac{1-q^{k+1}}{1-q}\rho\|\x^t-\x^*\|^2_\Phi\\
			&+\frac{1-q^{k+1}}{1-q} \left( \frac{3b^2+\nu^2}{\mathsf{c}S_t} \right). 
		\end{align*}
        }
		Consequently, it follows that
        {
        \begin{align}
			\notag\mathbb{E}[\|\x^{t+1}-\x^*\|^2_\Phi&\mid \mathcal{F}_t] \le \left(q^{K}+\frac{1-q^{K}}{1-q}\rho\right)\|\x^{t}-\x^*\|^2_\Phi\\
			&+\frac{1-q^{K}}{1-q} \left( \frac{3b^2+\nu^2}{\mathsf{c}S_t} \right). \label{tr1}
		\end{align}	
        }
	\end{proof} 

	\begin{theorem}[{\bf a.s. convergence of ({\bf DVRSFBF})}] \label{as_DVRSFBF} \em
		Let $\{\x^t\}_{t}$ be the stochastic process generated by ({\bf DVRSFBF}). Let Assumptions~\ref{ass:convex}-\ref{moment_biased} hold. Suppose operator $\opV$ is $\mu$-strongly monotone. Suppose $\sum_{t=1}^{\infty}\tfrac{1}{S_t} < \infty$. Let $\alpha = \lambda_{\max}(\Phi^{-1})$. Suppose $\mathsf{c}$ and $\alpha$ are sufficiently small such that $0<\mathsf{c}<\frac{\mu}{3}$ and $0<\alpha<\min\{\tfrac{\mu-3\mathsf{c}}{6\hat{L}^2},\tfrac{\sqrt{(2\mu+3\mathsf{c})^2+12\hat{L}^2}-(2\mu+3\mathsf{c})}{6\hat{L}^2}\}$. Then $\x^t \xrightarrow [a.s.]{t \to \infty} \x^*$. 	
	\end{theorem}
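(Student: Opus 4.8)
The plan is to apply the Robbins--Siegmund Lemma (Lemma~\ref{as_recur}) to the nonnegative sequence $v_t \triangleq \|\x^t-\x^*\|^2_\Phi$, feeding it the one-step contraction already established in Lemma~\ref{egt}. First I would put the recursion \eqref{rec:cond_exp_smon} into the canonical form required by Lemma~\ref{as_recur}. Abbreviating $c_K \triangleq q^{K}+\tfrac{1-q^{K}}{1-q}\rho$ and $\beta_t \triangleq \tfrac{1-q^{K}}{1-q}\cdot\tfrac{3b^2+\nu^2}{\mathsf{c}S_t}$, Lemma~\ref{egt} states precisely that $\mathbb{E}[v_{t+1}\mid\mathcal{F}_t]\le c_K v_t + \beta_t$ almost surely, with $v_t\ge 0$ adapted to $\mathcal{F}_t$. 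The hypotheses on $\mathsf{c}$ and $\alpha$ in the theorem are exactly those imposed in Lemma~\ref{egt}, so the constants $q<1$, $0<\rho<1-q$ produced there are available for free.

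The genuine point to verify—and the only nontrivial step at the level of the theorem, since the heavy analysis is already absorbed into Lemma~\ref{egt}—is that the per-outer-iteration factor is a strict contraction \emph{uniform in} $t$, i.e. $c_K\in(0,1)$. Nonnegativity is immediate. For the upper bound I would use $\rho<1-q$, which gives $\tfrac{\rho}{1-q}<1$ and hence $c_K = q^{K}+(1-q^{K})\tfrac{\rho}{1-q} < q^{K}+(1-q^{K})=1$. Consequently $u_t\equiv u\triangleq 1-c_K\in(0,1)$ is an admissible constant decay coefficient, so that $\sum_{t}u_t=\infty$. This strict inequality $\rho<1-q$ is really the crux: it is what converts the Lemma's bound into a bona fide contraction rather than a mere nonexpansive estimate.

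It then remains to dispatch the two summability requirements on $\beta_t$, which is routine. Because $\beta_t$ is a fixed constant times $1/S_t$ and the theorem assumes $\sum_{t\ge 1}\tfrac{1}{S_t}<\infty$, we get $\sum_t\beta_t<\infty$, and in particular $\beta_t\to 0$. Since $u_t$ is the positive constant $u$, we also obtain $\lim_{t\to\infty}\tfrac{\beta_t}{u_t}=\tfrac{1}{u}\lim_{t\to\infty}\beta_t=0$. All hypotheses of Lemma~\ref{as_recur} are thus satisfied, and it yields $v_t=\|\x^t-\x^*\|^2_\Phi\to 0$ almost surely. Finally, since $\Phi\succ 0$ the norm $\|\cdot\|_\Phi$ is equivalent to the Euclidean norm, so $\|\x^t-\x^*\|\to 0$ a.s., which is exactly $\x^t\xrightarrow[a.s.]{t\to\infty}\x^*$. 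I do not anticipate any serious obstacle here: the delicate variance-reduction and strong-monotonicity estimates live entirely in Lemma~\ref{egt}, and the theorem is essentially a clean invocation of the Robbins--Siegmund machinery once $c_K<1$ is confirmed.
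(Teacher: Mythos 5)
Your proposal is correct and follows essentially the same route as the paper: the paper's own proof likewise observes that $\rho<1-q$ forces $q^{K}+\tfrac{1-q^{K}}{1-q}\rho<1$, invokes the summability $\sum_{t}\tfrac{1}{S_t}<\infty$, and concludes via the Robbins--Siegmund Lemma applied to the recursion of Lemma~\ref{egt}. Your write-up is simply a more explicit verification of the lemma's hypotheses (constant $u_t$, $\sum_t\beta_t<\infty$, $\beta_t/u_t\to 0$, and the norm-equivalence step), all of which the paper leaves implicit.
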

	\begin{proof}
		{As $0<\mathsf{c}<\frac{\mu}{3}$ and $0<\alpha<\min\{\tfrac{\mu-3\mathsf{c}}{6\hat{L}^2},\tfrac{\sqrt{(2\mu+3\mathsf{c})^2+12\hat{L}^2}-(2\mu+3\mathsf{c})}{6\hat{L}^2}\}$, we have $q \, \triangleq \, (1-\alpha(\mu-3\mathsf{c})+3\alpha^2\hat{L}^2) < 1$ and $\rho \, \triangleq \, 3\alpha^2 \hat{L}^2 < 1-q$.} Therefore we obtain $q^{K}+\frac{1-q^{K}}{1-q}\rho<1$. Considering $\sum_{t=1}^{\infty}\tfrac{1}{S_t} < \infty$, we introduce Lemma \ref{as_recur}. Thus, we have $\norm{\x^{t}-\x^*}^2 \rightarrow 0$ in an a.s. sense as $ t \rightarrow \infty$.
	\end{proof}
	
	Next we examine the linear convergence of ({\bf DVRSFBF}) under strong monotonicity.  
	
	\begin{proposition}[{\bf Linear convergence of ({\bf DVRSFBF})}]\label{ratePvs1} 
		\em Let Assumptions~\ref{ass:convex}-\ref{moment_biased} hold. Suppose operator $\opV$ is $\mu$-strongly monotone. Let $\{\x^t\}_{t}$ denote the stochastic process generated by ({\bf DVRSFBF}), and $\{\x^*\}=\zer(\opV+\opT)$. Define $D^* \triangleq \|\x^0-\x^*\|_\Phi$. Then the following hold.
        \begin{itemize}
            \item[(a)] {$S_{t}=\lfloor {\eta^{-(t+1)}} \rfloor$} where $0<\eta<1$ and $\delta\triangleq q^{K}+\frac{1-q^{K}}{1-q}\rho < 1$. Then $\mathbb{E}[\|\x^t-\x^*\|^2_\Phi] \leq \tilde{G} \tilde{\eta}^t$ where $\tilde G > 0$  and $\tilde \eta = \max\{\delta,\eta\}$ if $\delta \neq \eta$ and $\tilde \eta \in (\delta,1)$ if $\delta = \eta$. 
		\item [(b)] Suppose $\x^{T+1}$ is such that $\mathbb{E}[\|\x^{T+1}-\x^*\|^2_\Phi] \le \epsilon$. {Then $\sum_{t=1}^TS_t \le
		\mathcal{O}\left(\frac{1}{\epsilon}\right).$}
        \end{itemize}
	\end{proposition}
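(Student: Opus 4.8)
The plan is to build both parts on the one-step contraction already established in Lemma~\ref{egt}. Taking total expectations in \eqref{rec:cond_exp_smon} and abbreviating $a_t \triangleq \mathbb{E}[\|\x^t-\x^*\|^2_\Phi]$, $\delta \triangleq q^{K}+\tfrac{1-q^{K}}{1-q}\rho$ and $C \triangleq \tfrac{1-q^{K}}{1-q}\cdot\tfrac{3b^2+\nu^2}{\mathsf c}$, the tower property turns \eqref{rec:cond_exp_smon} into the deterministic scalar recursion $a_{t+1} \le \delta a_t + C/S_t$ with $\delta<1$. For part (a) I would substitute $S_t=\lfloor\eta^{-(t+1)}\rfloor$; using $S_t \ge \tfrac12\eta^{-(t+1)}$ for $t$ large enough one gets $C/S_t \le C'\eta^{t}$ for a suitable constant $C'$ (the finitely many small-$t$ terms being absorbed into $C'$), and unrolling the recursion yields $a_t \le \delta^t a_0 + C'\sum_{s=0}^{t-1}\delta^{t-1-s}\eta^{s}$, where $a_0 = (D^*)^2$.

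The remaining work in (a) is to evaluate the convolution sum $\sum_{s=0}^{t-1}\delta^{t-1-s}\eta^{s}$, which is where the stated case distinction enters. When $\delta\neq\eta$ this geometric sum collapses to $(\eta^t-\delta^t)/(\eta-\delta)$, which is bounded by a constant times $\max\{\delta,\eta\}^{t}$; this gives $a_t \le \tilde G\,\tilde\eta^{t}$ with $\tilde\eta=\max\{\delta,\eta\}$. When $\delta=\eta$ the sum equals $t\delta^{t-1}$, and since $t\delta^{t-1}=o(\tilde\eta^{t})$ for every $\tilde\eta\in(\delta,1)$, the linear factor can be absorbed into a slightly larger geometric rate, recovering the stated $\tilde\eta\in(\delta,1)$.

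For part (b) I would first observe that the oracle cost is governed by its last term: since $S_t=\lfloor\eta^{-(t+1)}\rfloor$ grows geometrically with ratio $\eta^{-1}>1$, one has $\sum_{t=1}^{T}S_t \le \tfrac{1}{1-\eta}\eta^{-(T+1)} = \mathcal{O}(\eta^{-(T+1)})$. I would then use part (a) to convert the accuracy requirement into an iteration count: $\mathbb{E}[\|\x^{T+1}-\x^*\|^2_\Phi]\le\epsilon$ is guaranteed once $\tilde G\,\tilde\eta^{T+1}\le\epsilon$, i.e. $T+1 = \mathcal{O}(\log(1/\epsilon))$ and $\eta^{-(T+1)} \le (\tilde G/\epsilon)^{\log(1/\eta)/\log(1/\tilde\eta)}$. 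Substituting this into the geometric bound produces the claimed bound on $\sum_{t=1}^{T}S_t$.

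The main obstacle is precisely this last balancing step, because the sampling cost is dictated by the decay rate $\eta$ of $1/S_t$, whereas the achieved accuracy is dictated by $\tilde\eta=\max\{\delta,\eta\}$. The clean $\mathcal{O}(1/\epsilon)$ bound emerges exactly in the regime $\eta\ge\delta$, where $\tilde\eta=\eta$ and the exponent $\log(1/\eta)/\log(1/\tilde\eta)$ collapses to $1$, so that $\eta^{-(T+1)}=\mathcal{O}(\tilde G/\epsilon)$ and hence $\sum_{t=1}^{T}S_t=\mathcal{O}(1/\epsilon)$; for $\eta<\delta$ the identical computation only delivers $\mathcal{O}(\epsilon^{-c})$ with $c>1$, so I would present the geometric-growth rate $\eta$ as chosen to match (dominate) the contraction factor $\delta$. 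Some care is also needed with the floor function, bounding $\lfloor\eta^{-(t+1)}\rfloor$ above by $\eta^{-(t+1)}$ and below by $\tfrac12\eta^{-(t+1)}$ for large $t$, so that the constants $C'$ and $\tilde G$ are legitimate and independent of $\epsilon$.
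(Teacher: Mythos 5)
Your proposal is correct and follows essentially the same route as the paper's proof: the same one-step recursion inherited from Lemma~\ref{egt}, the same floor-function bounds $\tfrac12\eta^{-(t+1)}\le S_t\le\eta^{-(t+1)}$, the same unrolling with a case analysis on the geometric convolution (you merge the paper's cases $\delta<\eta$ and $\eta<\delta$ via the closed-form sum, and handle $\delta=\eta$ by absorbing the linear factor into a slightly larger rate, where the paper cites an auxiliary lemma), and for part (b) the same inversion of the linear rate to get $T=\mathcal{O}(\log(1/\epsilon))$ followed by summing the geometric batch sizes. If anything, your explicit caveat that the clean $\mathcal{O}(1/\epsilon)$ bound emerges only in the regime $\eta\ge\delta$ is sharper than the paper, which proves only the case $\delta<\eta$ and asserts without detail that the remaining cases ``lead to similar complexities.''
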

	\begin{proof}
		(a) Suppose $\hat{G}\triangleq \frac{1-q^{K}}{1-q} \left( \frac{3b^2+\nu^2}{\mathsf{c}}\right) $. According to \eqref{tr1},  we obtain the following:
		{\begin{align}
			\mathbb{E}[\|\x^{t+1}&-\x^*\|^2_\Phi]\le \delta\mathbb{E}[\|\x^t-\x^*\|^2_\Phi]+\tfrac{\hat{G}}{S_{t}}. \label{eq58p-t}
		\end{align}}
		Recall that $S_t$ can be bounded as seen next.
		\begin{align}
			S_t= \lfloor \eta^{-(t+1)} \rfloor \ge \left\lceil \tfrac{1}{2}\eta^{-(t+1)} \right\rceil \ge \tfrac{1}{2}\eta^{-(t+1)}. \label{eq59p-t}
		\end{align}
		We now consider three cases.
		
		\noindent  (i): $\delta<\eta<1$. Using \eqref{eq59p-t} in \eqref{eq58p-t} and defining $\bar G=2\hat{G}$, $\tilde G \triangleq (D^*+\tfrac{\bar G}{1-\delta/\eta})$, we obtain
		\begin{align}
			\notag\mathbb{E}[\|&\x^{t+1}-\x^*\|^2_\Phi] \le \delta\mathbb{E}[\|\x^t-\x^*\|^2_\Phi]+\tfrac{\hat{G}}{S_{t}}\\
			&\notag\leq\delta\mathbb{E}[\|\x^t-\x^*\|^2_\Phi]+\bar G\eta^{t+1} \\
			&\notag\le \delta^{t+1}{\|\x^0-\x^*\|_\Phi}+\bar G\sum_{j=1}^{t+1}\delta^{t+1-j}\eta^j 
			\\&\le D^*\delta^{t+1}+\bar G\eta^{t+1}\sum_{j=1}^{t+1}(\tfrac{\delta}{\eta})^{t+1-j}\le \tilde{G}{\eta}^{t+1}.\label{ineq-linear}
		\end{align}
		
		\noindent  (ii): $\eta<\delta<1$. Akin to (i) and defining $\tilde G$ appropriately,  $\mathbb{E}[\|\x^{k+1}-\x^*\|^2_\Phi] \le\tilde{G}{\delta}^{t+1}$. \\
		\noindent (iii): $\eta=\delta<1$. Proceeding similarly we obtain
		\begin{align}
			\notag\mathbb{E}[\|&\x^{t+1}-\x^*\|^2_\Phi] \le \delta^{t+1}\mathbb{E}[\|\x^0-\x^*\|^2_\Phi]+\bar{G}\sum_{j=1}^{t+1}\delta^{t+1}\\&\notag\le D^*\delta^{t+1}+\bar{G}\sum_{j=1}^{t+1}\delta^{t+1}\\ 
			\notag & = D^*\delta^{t+1}+\bar{G}(t+1)\delta^{t+1} 
			\overset{\tiny \mbox{\cite[Lemma~4]{ahmadi2016analysis}}}
			{\le} D^*\delta^{t+1} + G' \tilde{\eta}^{t+1},\label{eq65p-t}
		\end{align}
		\noindent where $\tilde{\eta} \in (\delta,1)$ and $G' > \tfrac{\bar{G}}{\ln(\tilde{\eta}/\delta)^e}$. We further let $\tilde{G} \triangleq (D^*+G')$. Then we have $\mathbb{E}[\|\x^{t+1}-\x^*\|^2_\Phi] \le (D^*+G')\tilde{\eta}^{t+1}$. Thus, $\{\x^t\}$ converges linearly in an expected-value sense. \\ 
		(b) Case (i): If $\delta<\eta<1$. From (a), it follows that
		\begin{align*}
			\mathbb{E}[\|\x^{T+1}-\x^*\|^2_\Phi] &\le \tilde{G}{\eta}^{T+1}\leq \  \epsilon \Longrightarrow  T \ge  \log_{1/{\eta}}(\tilde{G}/\epsilon) - 1 .
		\end{align*} 
		If $T = \lceil \log_{1/{\eta}}(\tilde{G}/\epsilon)\rceil - 1$, then ({\bf DVRSFBF}) requires $\sum_{t=0}^TS_t+2KT$ evaluations. Since $S_t=\lfloor \eta^{-(t+1)} \rfloor \le \eta^{-(t+1)}$, then we have 
		{\begin{align*}
			\quad \sum_{t=0}^{\lceil \log_{1/{\eta} }(\tilde{G}/\epsilon)\rceil -1}&\eta^{-(t+1)}  =
			\sum_{s=1}^{\lceil \log_{1/{\eta} }(\tilde{G}/\epsilon)\rceil}\eta^{-s}\\ 
			&\le \tfrac{1}{\eta\left(\tfrac{1}{\eta}-1\right)}\left(\tfrac{1}{\eta}\right)^{\lceil \log_{1/{\eta}}(\tilde{G}/\epsilon)\rceil}\\ 
			&\le \tfrac{1}{\left(1-\eta\right)}\left(\tfrac{1}{\eta}\right)^{\log_{1/\eta}(\tilde{G}/\epsilon)}
			\le \tfrac{1}{(1-\eta)}\left(\tfrac{\tilde{G}}{\epsilon}\right).
		\end{align*}
		It follows that
		\begin{align*}
			\sum_{t=0}^TS_t+2KT \le \tfrac{1}{(1-\eta)}\left(\tfrac{\tilde{G}}{\epsilon}\right)+2K\log_{1/{\eta}}(\tilde{G}/\epsilon).
		\end{align*}}
		Since $\delta<\eta<1$, we see that $q^{K}+\frac{1-q^{K}}{1-q}\rho<\eta$ and thus, we derive $K>\log_q\frac{\eta(1-q)-\rho}{1-q-\rho}$. Noting that $\tilde G \triangleq (D^*+\tfrac{\bar G}{1-\delta/\eta})=\tilde{C}\frac{1-q^K}{\frac{\eta(1-q)-\rho}{1-q-\rho}-q^K}$ for some appropriately defined $\tilde{C}$. We conclude $K= \lfloor \log_q\frac{\eta(1-q)-\rho}{1-q-\rho} \rfloor +1$ is the optimal choice. \\
		We omit cases (ii) and (iii) which lead to similar complexities.
	\end{proof}
	
\subsection{Convergence analysis under monotonicity}
    \begin{lemma}\label{lemma:bounded}
        Let Assumptions~\ref{ass:convex} and \ref{ass:X} hold. Then the solution set of the primal-dual problem \eqref{KKT_VI} is bounded. 
    \end{lemma}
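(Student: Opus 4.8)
The plan is to bound the primal and dual components of a solution $(\ubold^\ast,y^\ast)$ of \eqref{KKT_VI} separately. The primal part is immediate: any solution satisfies $\ubold^\ast\in\setD=\prod_i\setD_i$, which is compact by Assumption~\ref{ass:convex}(ii), so $\{\ubold^\ast\}$ is automatically bounded and the whole task reduces to producing a uniform bound on the multiplier $y^\ast$. First I would unpack the two lines of \eqref{KKT_VI}. The first line gives, for each $i$, a subgradient $\zeta_i^\ast\in\partial g_i(u_i^\ast)$ with $\nabla_{u_i}f_i(\ubold^\ast)+\zeta_i^\ast+A_i^\top y^\ast=\0$; stacking these yields $F(\ubold^\ast)+\zeta^\ast+A^\top y^\ast=\0$, where $\zeta^\ast=\op{col}(\zeta_i^\ast)$. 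The second line $\0_m\in\op{N}_{\R^{m}_{\geq0}}(y^\ast)-(A\ubold^\ast-b)$ encodes $y^\ast\geq\0$, feasibility $A\ubold^\ast-b\leq\0$, and complementarity $\inner{y^\ast,A\ubold^\ast-b}=0$.

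The core of the argument is a Slater-type estimate. By Assumption~\ref{ass:X} there is a strictly feasible $\hat{\ubold}\in\setD$ with $A\hat{\ubold}-b\leq-\delta\1_m$ for some $\delta>0$. Testing the stationarity identity against $\ubold^\ast-\hat{\ubold}$ and substituting $A^\top y^\ast=-(F(\ubold^\ast)+\zeta^\ast)$, I would write
$$
\inner{F(\ubold^\ast)+\zeta^\ast,\ubold^\ast-\hat{\ubold}}=\inner{A^\top y^\ast,\hat{\ubold}-\ubold^\ast}=\inner{y^\ast,A\hat{\ubold}-b}-\inner{y^\ast,A\ubold^\ast-b}.
$$
Complementarity annihilates the last term, while $y^\ast\geq\0$ together with $A\hat{\ubold}-b\leq-\delta\1_m$ gives $\inner{y^\ast,A\hat{\ubold}-b}\leq-\delta\norm{y^\ast}_1\leq-\delta\norm{y^\ast}$. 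Rearranging yields the coercive bound $\delta\norm{y^\ast}\leq\inner{F(\ubold^\ast)+\zeta^\ast,\hat{\ubold}-\ubold^\ast}$.

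Finally I would bound the right-hand side uniformly over all solutions. The term $\inner{F(\ubold^\ast),\hat{\ubold}-\ubold^\ast}$ is controlled by $\norm{F(\ubold^\ast)}\,\norm{\hat{\ubold}-\ubold^\ast}$, which is bounded since $F$ is continuous on the compact $\setD$ and $\setD$ has finite diameter. For the nonsmooth term I would deliberately avoid any pointwise bound on $\zeta^\ast$ and instead invoke convexity: $\inner{\zeta_i^\ast,\hat{u}_i-u_i^\ast}\leq g_i(\hat{u}_i)-g_i(u_i^\ast)\leq g_i(\hat{u}_i)-\min_{\setD_i}g_i$, where the minimum is finite because $g_i$ is lower semicontinuous on the compact $\setD_i$. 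Summing over $i$ gives a constant $C$ independent of the particular solution, whence $\norm{y^\ast}\leq C/\delta$, and boundedness of the full solution set follows. The hard part is exactly this last step: $\zeta^\ast$ need not be bounded (it may contain normal-cone directions of $\partial g_i$ at the boundary of $\setD_i$), so the whole role of Slater's condition is to convert stationarity into the coercive inequality $\delta\norm{y^\ast}\leq(\text{bounded})$; a secondary technical point is extracting the uniform margin $\delta>0$ of strict feasibility from Assumption~\ref{ass:X}.
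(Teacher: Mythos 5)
Your proof is correct, but it takes a genuinely different route from the paper's. The paper's own argument is essentially a citation: it notes that the primal solution set of \eqref{eq:VI} is bounded, asserts (maximal) monotonicity of $F$, and then invokes \cite[Proposition 3.3]{auslender2000} to conclude that the set of dual multipliers is nonempty and bounded, whence the primal-dual solution set of \eqref{KKT_VI} is bounded. You instead prove the dual bound from first principles: stationarity, complementarity, and a Slater point with margin $\delta$ give $\delta\norm{y^{\ast}}\leq\inner{F(\ubold^{\ast})+\zeta^{\ast},\hat{\ubold}-\ubold^{\ast}}$, and the right-hand side is bounded uniformly over solutions because $F$ is continuous on the compact set $\setD$ and the nonsmooth term is absorbed by the subgradient inequality $\inner{\zeta_i^{\ast},\hat{u}_i-u_i^{\ast}}\leq g_i(\hat{u}_i)-\min_{\setD_i}g_i$, which is exactly the right way to handle potentially unbounded subgradients at the boundary of $\setD_i$. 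Your approach buys two things: it is self-contained, and it nowhere uses monotonicity of $F$. The latter matters, because the paper's assertion that Assumption~\ref{ass:convex} makes $F$ monotone is not literally true---player-wise convexity of the $f_i$ does not imply a monotone pseudogradient; that is the content of Assumption~\ref{ass:GM}---so your argument in fact runs on weaker hypotheses than the citation it replaces. The one shared weak point, which you correctly flag, is the extraction of a strictly feasible point with a uniform margin: Assumption~\ref{ass:X} literally states only that $\setC_{i}(\ubold_{-i})$ has nonempty relative interior, which for affine constraints does not by itself yield $A\hat{\ubold}-b\leq-\delta\1_{m}$; both your proof and the result the paper cites require the genuine Slater reading that the paper itself announces (``we assume Slater's constraint qualification to hold''), so this is a gap in the paper's formulation of the assumption rather than in your argument.
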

    \begin{proof}
        Since Assumption~\ref{ass:convex} holds, the operator \( F: \mathbb{R}^d \rightarrow \mathbb{R}^d \) is (maximally) monotone. By Assumption~\ref{ass:X}, the solution set of the primal problem \eqref{eq:VI} is bounded. Hence, according to \cite[Proposition 3.3]{auslender2000}, the solution set of the associated dual problem is nonempty and bounded. Consequently, the solution set of the primal-dual formulation \eqref{KKT_VI} is bounded.
    \end{proof}
    
    By Lemma~\ref{lemma:bounded}, we can assume that \(\zer(\opV+\opT)\) is bounded. Based on this, the convergence behavior can be analyzed within a neighborhood of an arbitrary center point \( \x_c\in\setX \) by the \emph{restricted merit function} \cite{Nes07} that reads as
    \begin{align*}
        \text{Gap}(\z) = \sup_{\x \, \in \, \setX}\{&\inner{\opV(\x),\z -\x} + H(\z) - H(\x):\\
        &\norm{\x_c-\x}^2\leq C^2\},
    \end{align*}
    where $C$ is a constant and $H(\z):=g(\ubold)+\delta_{\R^{mn}_{\geq 0}}(y), \forall \z=(\ubold,p,y)\in\setX$. The following proposition formalizes this result.
    
 	\begin{proposition} \em Let Assumptions~\ref{ass:convex}-\ref{moment_biased} hold. Suppose the conditional means $\mathbb{E}[\eps_{j,t}(\x^t)  \mid  {\cal F}_t ],\, \mathbb{E}[\eps_{k+\frac{1}{2},t}(\z^t_{k+\frac{1}{2}}) \mid {\cal F}_{k,t} ],\, \mathbb{E}[\eps_{k+\frac{1}{2},t}(\x^t) \mid {\cal F}_{k,t}]$ are zero for all $t,k$ and $j$ in an a.s. sense. Let ${\setX}$ be a closed, convex, and nonempty set in $\R^n$. Suppose \textnormal{Zer}$(\mathbb{V}+\mathbb{T})$ is nonempty. Given $T > 0$, consider the sequence generated by ({\bf DVRSFBF}) where $\lambda_{\max}(\Phi^{-1}) = 1/T$, $K = T$, and $S_t \geq 1/\lambda^2_{\max}(\Phi^{-1})$. Suppose $\bar{\z}^T \, \triangleq \, \sum_{t=0}^{T-1} \bar{\z}^t/T$ and $\bar{\z}^t$ represents the averaged iterate at the $t$-th epoch, defined as $\bar{\z}^t \, \triangleq  \, \sum_{k=0}^{K-1} \z^t_{k+\frac{1}{2}}/K$. {Suppose $\x_c\in \setX$ is such a point that $\norm{\z^t_{\tilde{k}/2}-\x_c}^2\leq C^2$ for $t=0,1,\dots,T$ and $\tilde{k}=0,1,\dots,2K$, where $C$ is a constant. 
    Then the following hold.}
    \begin{itemize}
        \item [(a)] For any $T > 0$, we have that 
		\begin{align}
			\mathbb{E}\left[ \, \textnormal{Gap}(\bar{\z}^T) \, \right] \, \leq \, {\cal O}\left(\tfrac{1}{T}\right).
		\end{align}
        \item [(b)] Let $S_t=T^2$. Suppose $\mathbb{E}\left[ \, \op{Gap}(\bar{\z}^T) \, \right]  \leq\epsilon$. Then $\sum_{t=0}^TS_t + 2KT \le
		\mathcal{O}\left(\frac{1}{\epsilon^3}\right).$
    \end{itemize}    
    \end{proposition}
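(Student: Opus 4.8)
The plan is to control the restricted gap at the double average $\bar{\z}^T=\tfrac{1}{TK}\sum_{t=0}^{T-1}\sum_{k=0}^{K-1}\z^t_{k+\frac12}$ by a single telescoping sum of per-step variational residuals. First I would linearize the gap integrand. For any admissible $\x$, monotonicity of $\opV$ gives $\inner{\opV(\x),\z^t_{k+\frac12}-\x}\le\inner{\opV(\z^t_{k+\frac12}),\z^t_{k+\frac12}-\x}$; since $\opT=\partial H$ (the operator $\opT$ is exactly the block subdifferential of $H$), each $\vv^t_{k+\frac12}\in\opT(\z^t_{k+\frac12})$ is a subgradient and $H(\z^t_{k+\frac12})-H(\x)\le\inner{\vv^t_{k+\frac12},\z^t_{k+\frac12}-\x}$. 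Combining these with Jensen's inequality applied to the convex map $\z\mapsto\inner{\opV(\x),\z-\x}+H(\z)$ yields
$$\textnormal{Gap}(\bar{\z}^T)\le \frac{1}{TK}\sup_{\x}\sum_{t=0}^{T-1}\sum_{k=0}^{K-1}\inner{\opV(\z^t_{k+\frac12})+\vv^t_{k+\frac12},\z^t_{k+\frac12}-\x}.$$

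Next I would expose the telescoping structure through the two FBF recursions. The resolvent step gives $\vv^t_{k+\frac12}=\Phi(\z^t_k-\z^t_{k+\frac12})-\opV(\x^t)-\bar{\epsilon}_t$, and the correction step gives $\Phi(\z^t_{k+\frac12}-\z^t_{k+1})=\tilde{\opV}(\z^t_{k+\frac12},\xi_{k+\frac12,t})-\tilde{\opV}(\x^t,\xi_{k+\frac12,t})$. Substituting both and rewriting $\opV(\z^t_{k+\frac12})-\opV(\x^t)$ as the sampled difference minus the defects $\eps_{k+\frac12,t}(\cdot)$, the two three-point identities $\inner{a-b,b-c}_\Phi=\tfrac12(\norm{a-c}^2_\Phi-\norm{a-b}^2_\Phi-\norm{b-c}^2_\Phi)$ collapse the summand into $\tfrac12\norm{\z^t_k-\x}^2_\Phi-\tfrac12\norm{\z^t_{k+1}-\x}^2_\Phi$, plus the residual $\tfrac12\norm{\z^t_{k+\frac12}-\z^t_{k+1}}^2_\Phi-\tfrac12\norm{\z^t_k-\z^t_{k+\frac12}}^2_\Phi$, plus an error inner product $\inner{w^t_k,\z^t_{k+\frac12}-\x}$ with $w^t_k\triangleq-\bar{\epsilon}_t-\eps_{k+\frac12,t}(\z^t_{k+\frac12})+\eps_{k+\frac12,t}(\x^t)$. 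Because the algorithm chains the epochs ($\z^{t+1}_0=\x^{t+1}=\z^t_K$), summing over all $t,k$ telescopes the squared-distance terms down to $\tfrac12\norm{\x^0-\x}^2_\Phi$. The residual step-difference terms I would bound by the $L(\xi)$-Lipschitz estimate $\norm{\z^t_{k+\frac12}-\z^t_{k+1}}^2_\Phi\le(L(\xi_{k+\frac12,t})\lambda_{\max}(\Phi^{-1}))^2\norm{\z^t_{k+\frac12}-\x^t}^2_\Phi$; with $\lambda_{\max}(\Phi^{-1})=1/T$ and the standing boundedness hypothesis $\norm{\z^t_{\tilde{k}/2}-\x_c}\le C$, each such term is $\mathcal{O}(1/T)$, so their $TK=T^2$-fold sum is $\mathcal{O}(T)$ and the negative term $-\tfrac12\norm{\z^t_k-\z^t_{k+\frac12}}^2_\Phi$ may simply be discarded.

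The main obstacle is the simultaneous presence of the supremum over $\x$ and the stochastic errors $w^t_k$, which I would resolve by the splitting $\sup_{\x}\sum_{t,k}\inner{w^t_k,\z^t_{k+\frac12}-\x}=\sum_{t,k}\inner{w^t_k,\z^t_{k+\frac12}-\x_c}+C\,\norm{\sum_{t,k}w^t_k}$ (valid since $\norm{\x-\x_c}\le C$) and controlling each piece in expectation. Under the zero-mean hypothesis of part (a), the inner-loop defects $\eps_{k+\frac12,t}(\z^t_{k+\frac12})$ and $\eps_{k+\frac12,t}(\x^t)$ form martingale differences adapted to $\{\mathcal{F}_{k,t}\}$ (both $\z^t_{k+\frac12}$ and $\x^t$ being $\mathcal{F}_{k,t}$-measurable), so their contribution to $\sum\inner{\cdot,\z^t_{k+\frac12}-\x_c}$ vanishes in expectation and their aggregate $\ell^2$-norm in $\EE\norm{\sum w^t_k}$ grows only like $\sqrt{TK}=T$. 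The delicate term is the anchor defect $\bar{\epsilon}_t$: since $\z^t_{k+\frac12}$ depends on $\bar{\epsilon}_t$ it cannot be annihilated by unbiasedness, so I would split $\z^t_{k+\frac12}-\x_c=(\z^t_{k+\frac12}-\x^t)+(\x^t-\x_c)$, kill the second part by $\EE[\bar{\epsilon}_t\mid\mathcal{F}_t]=0$, and handle the first by Cauchy--Schwarz with the variance-reduction estimate $\EE[\norm{\bar{\epsilon}_t}^2\mid\mathcal{F}_t]\le\nu^2/S_t\le\nu^2/T^2$ (guaranteed by $S_t\ge1/\lambda_{\max}^2(\Phi^{-1})=T^2$) and the bounded displacement $\norm{\z^t_{k+\frac12}-\x^t}\le2C$, giving an $\mathcal{O}(1/T)$ contribution per step and $\mathcal{O}(T)$ in total.

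Collecting the estimates, $\EE[\sup_{\x}\sum_{t,k}(\cdots)]=\mathcal{O}(T)$, the leading $\tfrac12\norm{\x^0-\x}^2_\Phi$ being $\mathcal{O}(\lambda_{\max}(\Phi))=\mathcal{O}(T)$ under the boundedness of $\x$ and the common order $\Phi\sim T\,\Id$ of the stepsizes; dividing by $TK=T^2$ delivers $\EE[\textnormal{Gap}(\bar{\z}^T)]\le\mathcal{O}(1/T)$, which is (a). For (b), with $S_t=T^2$ the oracle budget is $\sum_{t=0}^{T}S_t+2KT=\mathcal{O}(T^3)+\mathcal{O}(T^2)=\mathcal{O}(T^3)$; to force $\EE[\textnormal{Gap}(\bar{\z}^T)]\le\epsilon$ through (a) it suffices to take $T=\mathcal{O}(1/\epsilon)$, whence the total number of evaluations is $\mathcal{O}(\epsilon^{-3})$. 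I expect the error-versus-supremum coupling of the third paragraph, and in particular the need for the batch size $S_t\ge T^2$ to tame the anchor defect $\bar{\epsilon}_t$, to be where essentially all the work lies.
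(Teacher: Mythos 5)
Your proposal is correct, and its backbone coincides with the paper's: the same linearization of the restricted gap (monotonicity of $\opV$, the subgradient inequality for $H$ with $\opT=\partial H$, Jensen), the same substitution of the resolvent and correction steps into three-point identities so that the squared distances telescope across both loops via $\z^{t+1}_0=\x^{t+1}=\z^t_K$, the same $[L(\xi)\lambda_{\max}(\Phi^{-1})]^2$ bound on the residual terms, and the same counting for part (b). The genuine difference is in the step you yourself identify as the crux: decoupling $\sup_{\x}$ from the noise $w^t_k$. The paper resolves it with Nemirovski-type auxiliary projected processes $A^{t}_{k+1}=P^{\Phi}_{\setX}[A^{t}_{k}+\Phi^{-1}\eps_{k+\frac{1}{2},t}(\z^t_{k+\frac{1}{2}})]$ and $B^{t}_{k+1}=P^{\Phi}_{\setX}[B^{t}_{k}-\Phi^{-1}\eps_{k+\frac{1}{2},t}(\x^{t})]$: the $\x$-dependent part of the noise contribution is absorbed into telescoping distances $\norm{A^{t}_{0}-\x}^2_\Phi$, $\norm{B^{t}_{0}-\x}^2_\Phi$ bounded by the ball diameter, while the leftover pairings $\inner{\eps_{k+\frac{1}{2},t}(\cdot),A^t_k-\z^t_{k+\frac{1}{2}}}$ involve only adapted quantities and vanish in expectation; the batch error $\bar{W}^{t}=-\bar{\epsilon}_t$ is then hit with plain Cauchy--Schwarz over the ball. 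You instead evaluate the supremum in closed form, $\sup_{\norm{\x-\x_c}\leq C}\inner{u,\x_c-\x}= C\norm{u}$, reducing everything to an anchored sum that dies by adaptedness/unbiasedness plus the term $C\,\EE\norm{\sum_{t,k}w^t_k}$, which you control by martingale orthogonality, $\EE\norm{\sum\eps}\leq\nu\sqrt{TK}=\nu T$. Both devices yield $\mathcal{O}(T)$ before the division by $TK=T^2$, so the rates agree; your route is more elementary (no auxiliary iterates, no projection machinery) but is tied to the explicit Euclidean-ball geometry of the restricted merit function, whereas the paper's trick would survive for any convex restriction set whose support function is not available in closed form. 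Three small points of polish: (i) your split of $\inner{\bar{\epsilon}_t,\z^t_{k+\frac{1}{2}}-\x_c}$ through $\x^t-\x_c$ is unnecessary, since Cauchy--Schwarz alone gives $\EE\abs{\inner{\bar{\epsilon}_t,\z^t_{k+\frac{1}{2}}-\x_c}}\leq C\nu/\sqrt{S_t}$ (the paper's own shortcut), so unbiasedness of $\bar{\epsilon}_t$ is never actually needed for this piece; (ii) you should state explicitly that the $\bar{\epsilon}_t$ block inside $\norm{\sum_{t,k}w^t_k}$ is also $\mathcal{O}(T)$ (triangle inequality with $\EE\norm{\bar{\epsilon}_t}\leq\nu/\sqrt{S_t}\leq\nu/T$); (iii) like the paper, you need all eigenvalues of $\Phi$ to be of order $T$, i.e.\ $\Phi\sim T\Id$, not merely $\lambda_{\max}(\Phi^{-1})=1/T$, for the $\norm{\x^0-\x}^2_\Phi/(KT)$ and Lipschitz-residual terms to be $\mathcal{O}(1/T)$ --- you at least flag this assumption, which the paper leaves implicit.
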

    \begin{proof}
	{
        We recall the following iteration 
	\begin{align*}
		\z^t_{k+\frac{1}{2}}&= \mathrm{J}_{\Phi^{-1}\opT}(\z^t_k-\Phi^{-1}{\bar{\opV}}(\x^t)), \\
		\z^t_{k+1}&= \z^t_{k+\frac{1}{2}}-\Phi^{-1}({\tilde{\opV}}(\z^t_{k+\frac{1}{2}},\xi_{k+\frac{1}{2},t})-{\tilde{\opV}}(\x^t,\xi_{k+\frac{1}{2},t}))
	\end{align*}
	To deduce the necessary energy properties of the scheme, we consider the optimality condition of the first iterate:
	$$
	\Phi(\z^{t}_{k}-\z^t_{k+\frac{1}{2}})-\bar{\opV}(\x^t)\in \opT(\z^t_{k+\frac{1}{2}})
	$$
	For $g=\sum_{i\in\scrI}g_{i}$, this optimality condition reads in coordinates as 
	\begin{equation}
		g(\ubold)-g(\ubold^{t}_{k+\frac{1}{2}}) \geq \inner{\z^{t}_{k}-\z^t_{k+\frac{1}{2}}-\Phi^{-1}\bar{\opV}(\x^{t}),\x-\z^t_{k+\frac{1}{2}}}_{\Phi} 
	\end{equation}
	Fix $\x=(\ubold,p,y)\in\setM\triangleq\{\z\in\setX \mid \norm{\z-\x_c}^2\leq C^2\}$ as reference point. We then compute 
	\begin{align*}
		&\norm{\z^{t}_{k+1}-\x}^{2}_{\Phi}=\norm{\z^{t}_{k+1}-\z^t_{k+\frac{1}{2}}}^{2}_{\Phi}-\norm{\z^t_{k+\frac{1}{2}}-\z^{t}_{k}}^{2}_{\Phi}\\
		&~~+\norm{\z^{t}_{k}-\x}^{2}_{\Phi}+2\inner{\z^{t}_{k+1}-\z^{t}_{k},\z^t_{k+\frac{1}{2}}-\x}_{\Phi}\\
		&~=\norm{\z^{t}_{k}-\x}^{2}_{\Phi}+\norm{\tilde{\opV}(\z^{t}_{k+1},\xi^{t}_{k+\frac{1}{2}})-\tilde{\opV}(\x^{t},\xi^{t}_{k+\frac{1}{2}})}_{\Phi^{-1}}^{2}\\
		&~~-\norm{\z^t_{k+\frac{1}{2}}-\z^{t}_{k}}^{2}_{\Phi}+2\inner{\z^{t}_{k+1}-\z^t_{k+\frac{1}{2}},\z^t_{k+\frac{1}{2}}-\x}_{\Phi}\\
            &~~+2\inner{\z^t_{k+\frac{1}{2}}-\z^{t}_{k},\z^t_{k+\frac{1}{2}}-\x}_{\Phi}\\
		&~=\norm{\z^{t}_{k}-\x}^{2}_{\Phi}+\norm{\tilde{\opV}(\z^{t}_{k+1},\xi^{t}_{k+\frac{1}{2}})-\tilde{\opV}(\x^{t},\xi^{t}_{k+\frac{1}{2}})}_{\Phi^{-1}}^{2}\\
		&~~-\norm{\z^t_{k+\frac{1}{2}}-\z^{t}_{k}}^{2}_{\Phi}+2\inner{\z^{t}_{k+1}-\z^t_{k+\frac{1}{2}},\z^t_{k+\frac{1}{2}}-\x}_{\Phi}\\
		&~~+2\inner{\z^t_{k+\frac{1}{2}}-(\z^{t}_{k}-\Phi^{-1}\bar{\opV}(\x^{t})),\z^t_{k+\frac{1}{2}}-\x}_{\Phi}\\
            &~~+2\inner{\bar{\opV}(\x^{t}),\x-\z^t_{k+\frac{1}{2}}}\\
		&~\leq \norm{\z^{t}_{k}-\x}^{2}_{\Phi}+\norm{\tilde{\opV}(\z^{t}_{k+1},\xi^{t}_{k+\frac{1}{2}})-\tilde{\opV}(\x^{t},\xi^{t}_{k+\frac{1}{2}})}_{\Phi^{-1}}^{2}\\
		&~~-\norm{\z^t_{k+\frac{1}{2}}-\z^{t}_{k}}^{2}_{\Phi}+2\inner{\z^{t}_{k+1}-\z^t_{k+\frac{1}{2}},\z^t_{k+\frac{1}{2}}-\x}_{\Phi}\\
		  &~~-2\inner{\bar{\opV}(\x^{t}),\z^t_{k+\frac{1}{2}}-\x} + 2[g(\ubold)-g(\ubold^{t}_{k+\frac{1}{2}})]\\
            &~\leq \norm{\z^{t}_{k}-\x}^{2}_{\Phi}+\norm{\tilde{\opV}(\z^{t}_{k+1},\xi^{t}_{k+\frac{1}{2}})-\tilde{\opV}(\x^{t},\xi^{t}_{k+\frac{1}{2}})}_{\Phi^{-1}}^{2}\\
		&~~-\norm{\z^t_{k+\frac{1}{2}}-\z^{t}_{k}}^{2}_{\Phi} -2\inner{\bar{\opV}(\x^{t}),\z^t_{k+\frac{1}{2}}-\x}\\
            &~~-2\inner{\tilde{\opV}(\z^t_{k+\frac{1}{2}}),\xi_{k+\frac{1}{2},t})-\tilde{\opV}(\x^{t},\xi_{k+\frac{1}{2},t}),\z^t_{k+\frac{1}{2}}-\x}\\
            &~~ + 2[g(\ubold)-g(\ubold^{t}_{k+\frac{1}{2}})].\\
	\end{align*}
	To make this expression appear a bit more symmetric, we can define the function $H:\setX\to\R$ by 
	$$
	H(\z):=g(\ubold)+\delta_{\R^{mn}_{\geq 0}}(y)\qquad\forall \z=(\ubold,p,y)\in\setX.
	$$
	We note in passing that $\partial H(\z)=\opT(\z)$. Using now the Lipschitz continuity of the pseudo-gradient, we can continue the above estimate by 
	\begin{align*}
		&\norm{\z^{t}_{k+1}-\x}^{2}_{\Phi}\leq\norm{\z^{t}_{k}-\x}^{2}_{\Phi} -\norm{\z^t_{k+\frac{1}{2}}-\z^{t}_{k}}^{2}_{\Phi}\\
            &~+2\inner{\bar{W}^{t},\z^t_{k+\frac{1}{2}}-\x}-2\inner{\opV(\x^{t}),\z^t_{k+\frac{1}{2}}-\x}\\
		&~+[L_{\norm{\cdot}_{\Phi^{-1}}}(\xi_{k+\frac{1}{2},t})]^{2}\norm{\z^t_{k+\frac{1}{2}}-\x^{t}}_{\Phi}^{2}\\
            &~-2\inner{\tilde{\opV}(\z^t_{k+\frac{1}{2}},\xi_{k+\frac{1}{2},t})-\tilde{\opV}(\x^{t},\xi_{k+\frac{1}{2},t}),\z^t_{k+\frac{1}{2}}-\x}\\
		&~+2[H(\x)-H(\z^t_{k+\frac{1}{2}})],
	\end{align*}
	where we have set $\bar{W}^{t}+\bar{\opV}(\x^{t})=\opV(\x^{t})$. Recall the definition in \eqref{eq:def_eps} and observe that 
	\begin{align*}
		2&\inner{\tilde{\opV}(\z^t_{k+\frac{1}{2}},\xi_{k+\frac{1}{2},t})-\tilde{\opV}(\x^{t},\xi_{k+\frac{1}{2},t}),\z^t_{k+\frac{1}{2}}-x}\\
            &=2\inner{\eps_{k+\frac{1}{2},t}(\z^t_{k+\frac{1}{2}})-\eps_{k+\frac{1}{2},t}(\x^{t}),\z^t_{k+\frac{1}{2}}-\x}\\
		&~+\underbrace{2\inner{\opV(\z^t_{k+\frac{1}{2}})-\opV(\x),\z^t_{k+\frac{1}{2}}-\x}}_{\geq0}\\
            &~-2\inner{\opV(\x^t)-\opV(\x),\z^t_{k+\frac{1}{2}}-\x}\\
		&\geq -2\inner{\opV(\x^t)-\opV(\x),\z^t_{k+\frac{1}{2}}-\x}\\
            &~+2\inner{\eps_{k+\frac{1}{2},t}(\z^t_{k+\frac{1}{2}})-\eps_{k+\frac{1}{2},t}(\x^{t}),\z^t_{k+\frac{1}{2}}-\x}.
	\end{align*}
	Using this bound, and rearranging the previous estimates, we arrive at 
	\begin{align*}
		2&\inner{\opV(\x),\z^t_{k+\frac{1}{2}}-\x}+2[H(\z^t_{k+\frac{1}{2}})-H(\x)] \\
            & \leq \norm{\z^{t}_{k}-\x}^{2}_{\Phi}-\norm{\z^{t}_{k+1}-\x}^{2}_{\Phi} +2\inner{\bar{W}^{t},\z^t_{k+\frac{1}{2}}-\x}\\
		& ~+[L_{\norm{\cdot}_{\Phi^{-1}}}(\xi_{k+\frac{1}{2},t})]^{2}\norm{\z^t_{k+\frac{1}{2}}-\x^{t}}_{\Phi}^{2}-\norm{\z^t_{k+\frac{1}{2}}-\z^{t}_{k}}_{\Phi}^{2} \\
            &~-2\inner{\eps_{k+\frac{1}{2},t}(\z^t_{k+\frac{1}{2}})-\eps_{k+\frac{1}{2},t}(\x^{t}),\z^t_{k+\frac{1}{2}}-\x}.
	\end{align*}
	Defining the function 
	\begin{equation}
		\scrQ(\x,\z):=\inner{\opV(\x),\z-\x}+H(\z)-H(\x), 
	\end{equation}
	we can continue the above estimate as 
	\begin{align*}
		2&\scrQ(\x,\z^t_{k+\frac{1}{2}}) \leq \norm{\z^{t}_{k}-\x}^{2}_{\Phi}-\norm{\z^{t}_{k+1}-\x}^{2}_{\Phi} -\norm{\z^t_{k+\frac{1}{2}}-\z^{t}_{k}}_{\Phi}^{2}\\
		& +[L_{\norm{\cdot}_{\Phi^{-1}}}(\xi_{k+\frac{1}{2},t})]^{2}\norm{\z^t_{k+\frac{1}{2}}-\x^{t}}_{\Phi}^{2} +2\inner{\bar{W}^{t},\z^t_{k+\frac{1}{2}}-\x}\\
		&-2\inner{\eps_{k+\frac{1}{2},t}(\z^t_{k+\frac{1}{2}})-\eps_{k+\frac{1}{2},t}(\x^{t}),\z^t_{k+\frac{1}{2}}-\x}.
	\end{align*}
	Dropping the cross term and summing over the inner iteration index $k=0,\ldots,K-1$, this implies 
	\begin{align*}
		&\frac{1}{K}\sum_{k=0}^{K-1}\scrQ(\x,\z^t_{k+\frac{1}{2}}) \leq \frac{1}{2K}\norm{\z^{t}_{0}-\x}^{2}_{\Phi}-\frac{1}{2K}\norm{\z^{t}_{K}-\x}^{2}_{\Phi}\\
		&+\frac{1}{2K}\sum_{k=0}^{K-1}[L_{\norm{\cdot}_{\Phi^{-1}}}(\xi_{k+\frac{1}{2},t})]^{2}\norm{\z^t_{k+\frac{1}{2}}-\x^{t}}_{\Phi}^{2} +\inner{\bar{W}^{t},\bar{\z}^{t}-x} \\
		&-\frac{1}{K}\sum_{k=0}^{K-1}\inner{\eps_{k+\frac{1}{2},t}(\z^t_{k+\frac{1}{2}})-\eps_{k+\frac{1}{2},t}(\x^{t}),\z^t_{k+\frac{1}{2}}-\x}
	\end{align*}
	where $\bar{\z}^{t}:=\frac{1}{K}\sum_{k=0}^{K-1}\z^t_{k+\frac{1}{2}}$. Since $x\mapsto H(x)$ is convex, we have 
	$$
	\frac{1}{K}\sum_{k=0}^{K-1}\scrQ(\x,\z^t_{k+\frac{1}{2}})\geq \scrQ(\x,\bar{\z}^{t}). 
	$$
	Furthermore, $\z^{t}_{0}=\x^{t}$ and $\x^{t+1}=\z^{t}_{K}$, so that 
	\begin{align*}
		&\scrQ(\x,\bar{\z}^{t})\leq \frac{1}{2K}\norm{\x^{t}-\x}^{2}_{\Phi}-\frac{1}{2K}\norm{\x^{t+1}-\x}^{2}_{\Phi}\\
		&+\frac{1}{2K}\sum_{k=0}^{K-1}[L_{\norm{\cdot}_{\Phi^{-1}}}(\xi_{k+\frac{1}{2},t})]^{2}\norm{\z^t_{k+\frac{1}{2}}-\x^{t}}_{\Phi}^{2} +\inner{\bar{W}^{t},\bar{\z}^{t}-\x}\\
		&-\frac{1}{K}\sum_{k=0}^{K-1}\inner{\eps_{k+\frac{1}{2},t}(\z^t_{k+\frac{1}{2}})-\eps_{k+\frac{1}{2},t}(\x^{t}),\z^t_{k+\frac{1}{2}}-\x}
	\end{align*}
	Following the classical analysis of online stochastic approximation methods, we introduce auxiliary processes $(A^{t}_{k})_{k}$ and $(B^{t}_{k})_{k}$ by 
	$$
	A^{t}_{K}=A^{t+1}_{0},B^{t}_{K}=B^{t+1}_{0}, 
	$$
	and 
	\begin{align*}
		A^{t}_{k+1}&=P^{\Phi}_{\setX}[A^{t}_{k}+\Phi^{-1}\eps_{k+\frac{1}{2},t}(\z^t_{k+\frac{1}{2}})],\\ 
		B^{t}_{k+1}&=P^{\Phi}_{\setX}[B^{t}_{k}-\Phi^{-1}\eps_{k+\frac{1}{2},t}(\x^{t})],
	\end{align*}
	where $P^{\Phi}_{\setX}(x)=\op{argmin}_{x'\in\setX}\frac{1}{2}\norm{x-x'}_{\Phi}^{2}$ is the orthogonal projection onto $\setX$ under the weighed metric $\norm{\cdot}_{\Phi}$. From the definitions of these processes, we obtain 
	\begin{align*}
		\norm{A^{t}_{k+1}-\x}^{2}_{\Phi}\leq &\norm{A^{t}_{k}-\x}^{2}_{\Phi}+2\inner{\eps_{k+\frac{1}{2},t}(\z^t_{k+\frac{1}{2}}),A^{t}_{k}-\x}\\
            &+\norm{\eps_{k+\frac{1}{2},t}(\z^t_{k+\frac{1}{2}})}^{2}_{\Phi^{-1}},\\ 
		\norm{B^{t}_{k+1}-\x}^{2}_{\Phi}\leq &\norm{B^{t}_{k}-\x}^{2}_{\Phi}-2\inner{\eps_{k+\frac{1}{2},t}(\x^{t}),B^{t}_{k}-\x}\\
            &+\norm{\eps_{k+\frac{1}{2},t}(\x^{t})}^{2}_{\Phi^{-1}}.
	\end{align*}
	Hence, 
	\begin{align*}
		&\sum_{k=0}^{K-1}\inner{\eps_{k+\frac{1}{2},t}(\z^t_{k+\frac{1}{2}}),\x-\z^t_{k+\frac{1}{2}}}\leq \frac{1}{2}\norm{A^{t}_{0}-x}^{2}_{\Phi}\\
            &~-\frac{1}{2}\norm{A^{t+1}_{0}-x}^{2}_{\Phi} +\sum_{k=0}^{K-1}\inner{\eps_{k+\frac{1}{2},t}(\z^t_{k+\frac{1}{2}}),A^{t}_{k}-\z^t_{k+\frac{1}{2}}}\\
		&~+\frac{1}{2}\sum_{k=0}^{K-1}\norm{\eps_{k+\frac{1}{2},t}(\z^t_{k+\frac{1}{2}})}^{2}_{\Phi^{-1}}.
	\end{align*}
	In exactly the same way, we obtain 
	\begin{align*}
		&\sum_{k=0}^{K-1}\inner{\eps_{k+\frac{1}{2},t}(\x^{t}),\z^t_{k+\frac{1}{2}}-\x}\leq \frac{1}{2}\norm{B^{t}_{0}-\x}^{2}_{\Phi}\\
            &~-\frac{1}{2}\norm{B^{t+1}_{0}-\x}^{2}_{\Phi}+\sum_{k=0}^{K-1}\inner{\eps_{k+\frac{1}{2},t}(\x^{t}),\z^t_{k+\frac{1}{2}}-B^{t}_{k}}\\
		&~+\frac{1}{2}\sum_{k=0}^{K-1}\norm{\eps_{k+\frac{1}{2},t}(\x^{t})}^{2}_{\Phi^{-1}}.
	\end{align*}
	Substituting this into our upper bound for the gap function, we continue our estimation with 
	\begin{align*}
		\scrQ&(\x,\bar{\z}^{t})\leq\frac{1}{2K}\left(\norm{\x^{t}-\x}^{2}_{\Phi}-\norm{\x^{t+1}-\x}^{2}_{\Phi}\right)\\
            &+\frac{1}{2K}\sum_{k=0}^{K-1}[L_{\norm{\cdot}_{\Phi^{-1}}}(\xi_{k+\frac{1}{2},t})]^{2}\norm{\z^t_{k+\frac{1}{2}}-\x^{t}}_{\Phi}^{2} \\
		&+\frac{1}{2K}\left(\norm{A^{t}_{0}-\x}^{2}_{\Phi}-\norm{A^{t+1}_{0}-\x}^{2}_{\Phi}\right)\\
            &+\frac{1}{2K}\left(\norm{B^{t}_{0}-\x}^{2}_{\Phi}-\norm{B^{t+1}_{0}-\x}^{2}_{\Phi}\right) +\inner{\bar{W}^{t},\bar{\z}^{t}-\x}\\
		&+\frac{1}{2K}\sum_{k=0}^{K-1}\left(\norm{\eps_{k+\frac{1}{2},t}(\z^t_{k+\frac{1}{2}})}^{2}_{\Phi^{-1}}+\norm{\eps_{k+\frac{1}{2},t}(\x^{t})}^{2}_{\Phi^{-1}}\right)\\
		&+\frac{1}{K}\sum_{k=0}^{K-1}\inner{\eps_{k+\frac{1}{2},t}(\z^t_{k+\frac{1}{2}}),A^{t}_{k}-\z^t_{k+\frac{1}{2}}}\\
            &+\frac{1}{K}\sum_{k=0}^{K-1}\inner{\eps_{k+\frac{1}{2},t}(\x^{t}),\z^t_{k+\frac{1}{2}}-B^{t}_{k}}.
	\end{align*}
	Next, we sum over the outer iteration index counter $t=0,1,\ldots,T-1$ and call $\bar{\z}_{T}=\frac{1}{T}\sum_{t=0}^{T-1}\bar{\z}^{t}$, to get 
	\begin{align*}
		&\scrQ(\x,\bar{\z}_{T})\leq \frac{1}{2KT}\left(\norm{\x^{0}-\x}^{2}_{\Phi}-\norm{\x^{T}-\x}^{2}_{\Phi}\right)\\
            &+\frac{1}{2KT}\sum_{t=0}^{T-1}\sum_{k=0}^{K-1}[L_{\norm{\cdot}_{\Phi^{-1}}}(\xi_{k+\frac{1}{2},t})]^{2}\norm{\z^t_{k+\frac{1}{2}}-\x^{t}}_{\Phi}^{2} \\
		&+\frac{1}{2KT}\left(\norm{A^{0}_{0}-\x}^{2}_{\Phi}-\norm{A^{T}_{0}-\x}^{2}_{\Phi}\right)\\
            &+\frac{1}{2KT}\left(\norm{B^{0}_{0}-\x}^{2}_{\Phi}-\norm{B^{T}_{0}-\x}^{2}_{\Phi}\right)+\frac{1}{T}\sum_{t=0}^{T-1}\inner{\bar{W}^{t},\bar{\z}^{t}-\x}\\
		&+\frac{1}{2KT}\sum_{t=0}^{T-1}\sum_{k=0}^{K-1}\left(\norm{\eps_{k+\frac{1}{2},t}(\z^t_{k+\frac{1}{2}})}^{2}_{\Phi^{-1}}+\norm{\eps_{k+\frac{1}{2},t}(\x^{t})}^{2}_{\Phi^{-1}}\right)\\
		&+\frac{1}{KT}\sum_{t=0}^{T-1}\sum_{k=0}^{K-1}\inner{\eps_{k+\frac{1}{2},t}(\z^t_{k+\frac{1}{2}}),A^{t}_{k}-\z^t_{k+\frac{1}{2}}}\\
            &+\frac{1}{KT}\sum_{t=0}^{T-1}\sum_{k=0}^{K-1}\inner{\eps_{k+\frac{1}{2},t}(\x^{t}),\z^t_{k+\frac{1}{2}}-B^{t}_{k}}.
	\end{align*}
	To perform the final bounds, we set $A^{0}_{0}=x^{0}=B^{0}_{0}$, and ignore the negative terms, we obtain 
	\begin{align*}
		&\sup_{\x\in\setM}\scrQ(\x,\bar{z}_{T})\leq \sup_{\x\in\setM}\frac{3}{2KT}\norm{\x^{0}-\x}^{2}_{\Phi}+\frac{1}{T}\sum_{t=0}^{T-1}\inner{\bar{W}^{t},\bar{\z}^{t}-\x}\\
            &~+\frac{1}{2KT}\sum_{t=0}^{T-1}\sum_{k=0}^{K-1}[L_{\norm{\cdot}_{\Phi^{-1}}}(\xi_{k+\frac{1}{2},t})]^{2}\norm{\z^t_{k+\frac{1}{2}}-\x^{t}}_{\Phi}^{2} \\
            &~+\frac{1}{2KT}\sum_{t=0}^{T-1}\sum_{k=0}^{K-1}\left(\norm{\eps_{k+\frac{1}{2},t}(\z^t_{k+\frac{1}{2}})}^{2}_{\Phi^{-1}}+\norm{\eps_{k+\frac{1}{2},t}(\x^{t})}^{2}_{\Phi^{-1}}\right)\\
		&~+\frac{1}{KT}\sum_{t=0}^{T-1}\sum_{k=0}^{K-1}\inner{\eps_{k+\frac{1}{2},t}(\z^t_{k+\frac{1}{2}}),A^{t}_{k}-\z^t_{k+\frac{1}{2}}}\\
            &~+\frac{1}{KT}\sum_{t=0}^{T-1}\sum_{k=0}^{K-1}\inner{\eps_{k+\frac{1}{2},t}(\x^{t}),\z^t_{k+\frac{1}{2}}-B^{t}_{k}}
	\end{align*}
	where $\setM\triangleq\{\z\in\setX \mid \norm{\z-\x_c}^2\leq C^2\}$ is a compact set containing $\x$ in its interior. By invoking the definition of the restricted merit function at $\bar{\z}^T$, i.e. $\text{Gap}(\z) = \sup_{\x \, \in \, \setX}\{\inner{\opV(\x),\z -\x} + H(\z) - H(\x):\norm{\x_c-\x}^2\leq C^2\}$ and then taking expectations on both sides, we obtain    
		\begin{align}
			\notag&\mathbb{E}\left[ \,  \text{Gap}(\bar{\z}^T) \,  \right] \leq \frac{3}{2KT}\cdot\|\Phi\|\cdot 2C^2+\frac{1}{T}\sum_{t=0}^{T-1}\EE \left[\norm{\bar{W}^{t}}\cdot \sqrt{2}C\right]\\
            \notag&~+\frac{1}{2KT}\sum_{t=0}^{T-1}\sum_{k=0}^{K-1}\EE \left[ [L_{\norm{\cdot}_{\Phi^{-1}}}(\xi_{k+\frac{1}{2},t})]^{2}\norm{\z^t_{k+\frac{1}{2}}-\x^{t}}_{\Phi}^{2}\right] \\
            \notag&~+\frac{1}{2KT}\sum_{t=0}^{T-1}\sum_{k=0}^{K-1}\EE \left[\norm{\eps_{k+\frac{1}{2},t}(\z^t_{k+\frac{1}{2}})}^{2}_{\Phi^{-1}}\right]\\
            \notag&~+\frac{1}{2KT}\sum_{t=0}^{T-1}\sum_{k=0}^{K-1}\EE \left[\norm{\eps_{k+\frac{1}{2},t}(\x^{t})}^{2}_{\Phi^{-1}}\right]\\
		\notag&~+\frac{1}{KT}\sum_{t=0}^{T-1}\sum_{k=0}^{K-1}\EE \left[\inner{\eps_{k+\frac{1}{2},t}(\z^t_{k+\frac{1}{2}}),A^{t}_{k}-\z^t_{k+\frac{1}{2}}}\right]
        \end{align}
        \begin{align}
            \notag&~+\frac{1}{KT}\sum_{t=0}^{T-1}\sum_{k=0}^{K-1}\EE \left[\inner{\eps_{k+\frac{1}{2},t}(\x^{t}),\z^t_{k+\frac{1}{2}}-B^{t}_{k}}\right]\\
            \notag&\leq\frac{3}{2KT}\cdot\|\Phi\|\cdot 2C^2 + \frac{1}{2KT}\sum_{t=0}^{T-1}\sum_{k=0}^{K-1}\|\Phi^{-1}\|^2\hat{L}^2\cdot2C^2\cdot\|\Phi\|\\
            \notag&~+\frac{1}{T}\sum_{t=0}^{T-1}\cdot\frac{\sqrt{2}C\nu}{\sqrt{S_t}}+\frac{1}{2KT}\sum_{t=0}^{T-1}\sum_{k=0}^{K-1}\left(\|\Phi^{-1}\|\nu^2+\|\Phi^{-1}\|\nu^2\right)\\
            &\leq\frac{3C^2}{T}+\frac{\hat{L}^2C^2}{T}+\frac{\sqrt{2}C\nu}{T}+\frac{\nu^2}{T}, 
		\end{align}
		where the second inequality follows by imposing conditional unbiasedness and the third inequality is a result of choosing $\lambda_{\max}(\Phi^{-1}) = 1/T$, $K = T$, and $S_t \geq 1/\lambda^2_{\max}(\Phi^{-1})$.  

        (b) Let $Q\triangleq 3C^2+\hat{L}^2C^2+\sqrt{2}C\nu+\nu^2$. We have that
        \begin{align*}
            \mathbb{E}\left[ \, G(\bar{\z}^T) \,\right] \leq\frac{Q}{T}\leq \epsilon \Longrightarrow T\geq \frac{Q}{\epsilon}
        \end{align*}
        Let $T=\lceil\frac{Q}{\epsilon}\rceil\leq\frac{Q}{\epsilon}+1$. Since DVRSFBF requires $\sum_{t=0}^TS_t + 2KT$ evaluations,
        \begin{align*}
            \sum_{t=0}^TS_t + 2KT=(T+1)T^2+2T^2\leq \frac{Q^3}{\epsilon^3} + \frac{6Q^2}{\epsilon^2} + \frac{9Q}{\epsilon} + 4.
        \end{align*}
        }
	\end{proof}
	
\section{Numerical Results}
\begin{figure}[t]\vspace{0.2cm}
	\begin{center}
		\includegraphics[width=6cm]{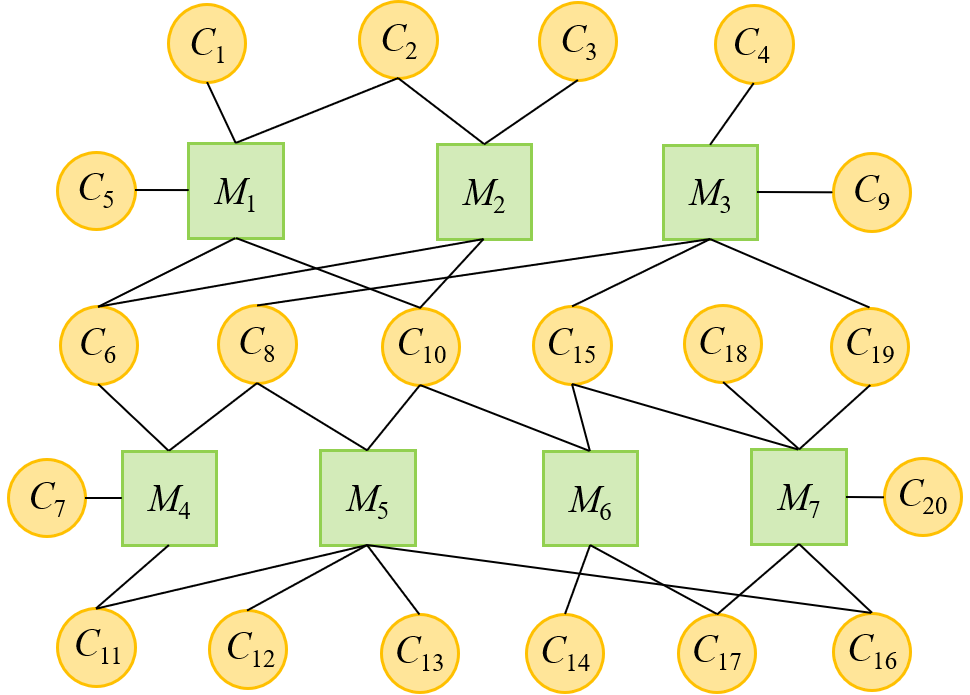}
            \caption{Game setting: the coupling relationship between firms and markets.}\label{fig_market}
	\end{center}	
\end{figure}
\begin{figure}[t]
	\begin{center}
		\includegraphics[width=7cm]{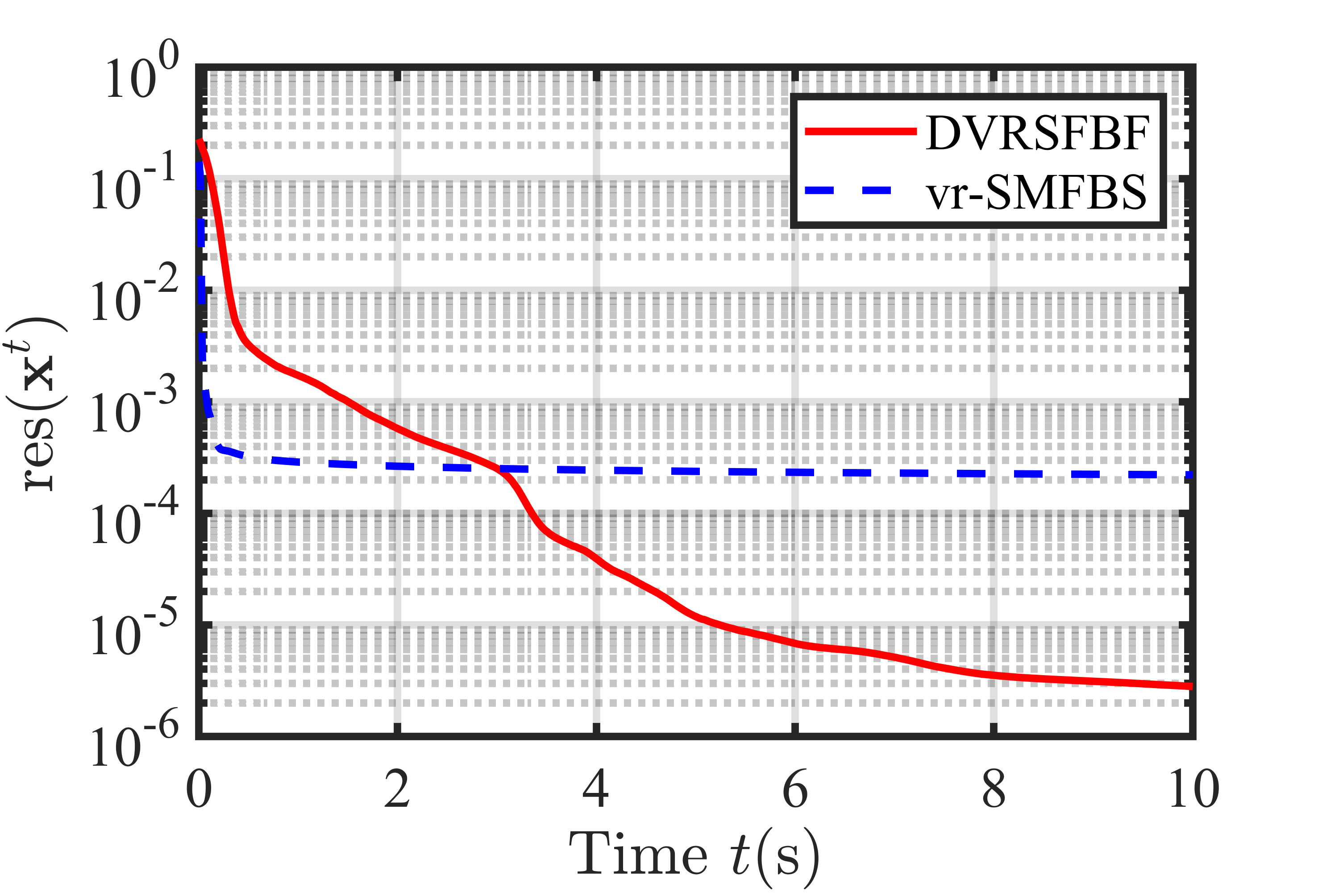}
		\caption{Residual distance of the primal variable form the solution. (strongly mono.)}\label{fig_str}
        \end{center}
\end{figure}
\begin{figure}[t]
	\begin{center}
		\includegraphics[width=7cm]{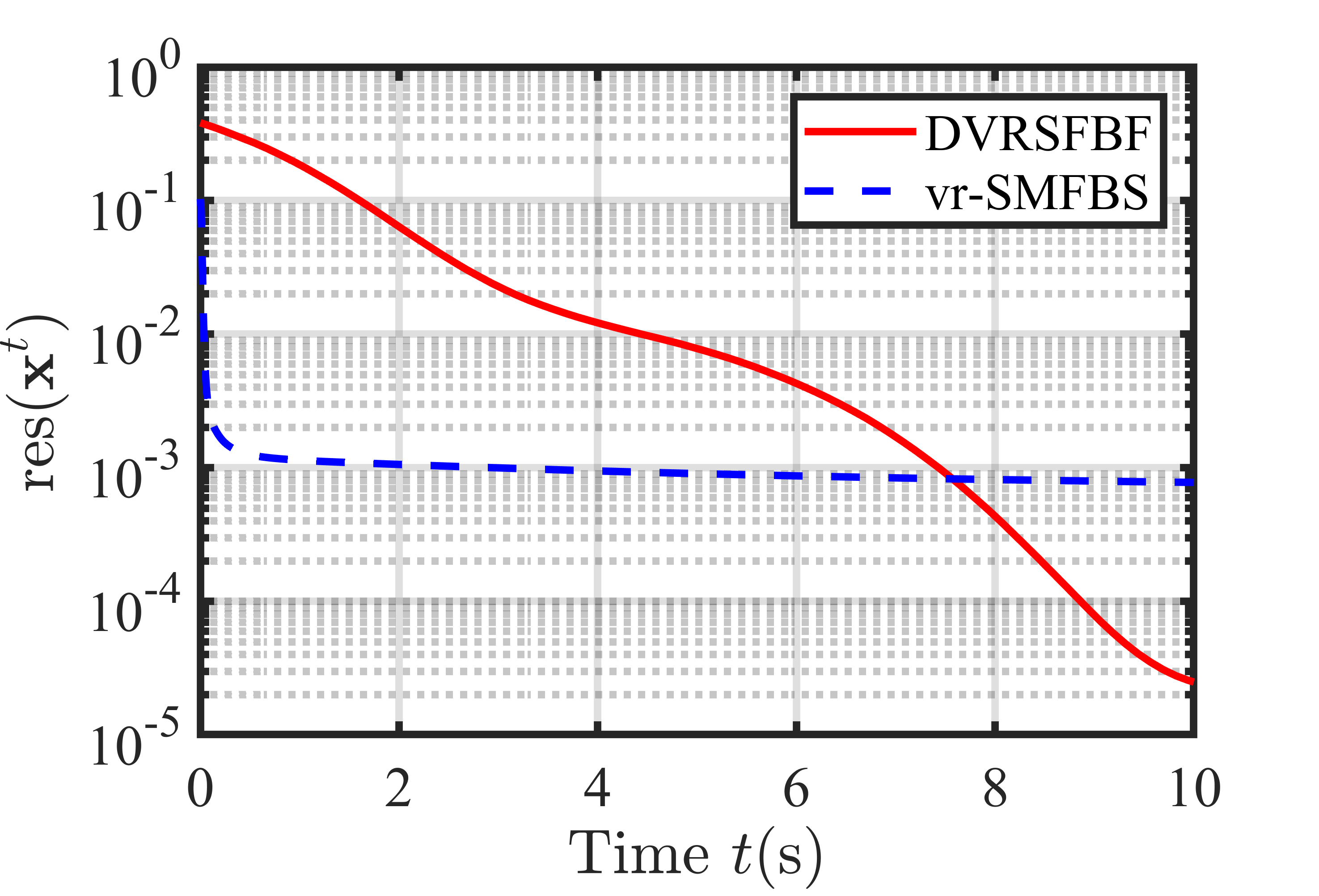}
            \caption{Residual distance of the primal variable form the solution. (merely mono.)}\label{fig_mono}
        \end{center}
\end{figure}
To demonstrate the capabilities of our DVRSFBF algorithm, we conduct numerical simulations in comparison with vr-SMFBS \cite{cui2023variance}. We consider the widely studied networked Cournot game with market capacity constraints and random prices, similar to examples in \cite{yu2017,YiPav19,cui2021relaxed}.

In this game, $N=20$ companies compete in $m=7$ markets, with the coupling relationship depicted in Fig. \ref{fig_market}. Firm $C_i$ sells products to $d_i$ markets, and this correspondence is represented by matrix $A_i \in \R^{m\times d_i}$. By arranging the markets in which firm $i$ participates in ascending order of their indices, if the $k$-th market in which firm $i$ participates is $M_j$, then $[A_i]_{jk} = 1$; otherwise, $[A_i]_{jk} = 0$. Let $A= [A_1\mid\dots\mid A_N]\in\R^{m\times d}$, we can see that the aggregate supply provided by all firms to all markets is $A\ubold = \sum_{i\in \mc I} A_iu_i$. The capacity of all markets are denoted by $b\in \R^m$, with each component randomly selected from the interval $[0.5, 1]$. Thus, we have the global constraint $A\ubold \leq b$. Besides, every firm has its local constraint, i.e., limited production $\0\leq u_i\leq \theta_i$, and each element of $\theta_i$ is randomly drawn from $[1,1.5]$. The production cost of $C_i$ is defined as $c_i(u_i) = a_i(\sum_{j=1}^{d_i}[u_i]_j)^2 + r_i^\top u_i$, where $a_i$ is drawn from $[1,8]$ and each element of $r_i \in \R^{d_i}$ is randomly drawn from $[0.1,0.6]$ for all $i\in \mc I$. In this case, the corresponding pseudogradient mapping is strongly monotone \cite{YiPav19}. Moreover, the inverse demand function for each market $M_j$ is expressed as $P_j(\ubold)=q_j - p_j(\xi)[A\ubold]_j$, where $q_j$ and $p_j$ are randomly drawn from $[2,4]$ and $[5,7]$ respectively. The uncertain parameter $p_j(\xi)$ is normally distributed with a mean of $p_j$ and a bounded variance of 0.1. Finally, we obtain the cost function of $C_i$, i.e., $J_i(u_i,u_{-i}) = c_i(u_i) - \sum_{j\in \mc M}\EE[P_j(\ubold)[A_iu_i]_j]$.

All step size parameters are chosen according to the optimal values discussed in Section \ref{sec:convergence}. Under possible biased estimate settings, the uncertain parameter $p_j(\xi)$ is subject to a normal distribution with a bounded variance of 0.1 and at each iteration step, its mean value is randomly generated around the constant value $p_j$ within an error sphere of radius $1/\sqrt{S_t}$. The biased estimator scenario is restricted to the strongly monotone case. We set $S_t=\lfloor {\eta^{-2(t+1)}} \rfloor$ as in Prop. \ref{ratePvs1} with $\eta=0.99$. In this case, the optimal number of inner iterations \( K=20\). Moreover, we adopt a cycle graph arranged in alphabetical order as the operator graph $\mathcal{G}_y$.

Fig. \ref{fig_str} and \ref{fig_mono} show the empirical behavior averaged over 10 trajectories of DVRSFBF under strong monotonicity and mere monotonicity respectively. The residual of the primal variable $\op{res}(\x^t)=\norm{\op{proj}_{\setC}(\x^t-\Phi^{-1}\opV(\x^t))-\x^t}$ reflects the deviation from the desired VE. We compare our algorithm with variance-reduced stochastic modified forward-backward-forward splitting (vr-SMFBS) \cite{cui2023variance}, which is based on a distributed SFBF scheme with an increasing batch size described as follow:
\begin{equation}\tag{vr-SMFBS}
	\left\{\begin{array}{l}
		\x^{t+\frac{1}{2}}=\mathrm{J}_{\Phi^{-1}\opT}(\x^{t}-\Phi^{-1}\bar{\opV}(\x^{t},\xi_{t})),\\
		\x^{t+1}=\x^{t+\frac{1}{2}}-\Phi^{-1}(\bar{\opV}(\x^{t+\frac{1}{2}},\xi_{t+\frac{1}{2}})-\bar{\opV}(\x^{t},\xi_{t})),
	\end{array}\right.
\end{equation}
where $\bar{V}(\x^t,\xi_t)=\Sigma_{j=1}^{S_t}\tilde{\opV}(\x^t,\xi_{j,t})/{S_t}$, $\bar{\opV}(\x^{t+1/2},\xi_{t+1/2})=\Sigma_{j=1}^{S_t}\tilde{V}(\x^{t+1/2},\xi_{j,t+1/2})/{S_t}$.
The empirical error of DVRSFBF is markedly smaller compared to vr-SMFBS. Initially, due to the limited sample size, the performance of the dual-loop structure of DVRSFBF slightly trails behind vr-SMFBS. However, over time and with an increase in the accuracy of gradient estimation, the superiority of DVRSFBF becomes increasingly pronounced. While the performance of vr-SMFBS slows down significantly after a few seconds, DVRSFBF continues to decline at a relatively fast pace. 

We also conduct comparisons by varying the network scale and batch size, as presented in Table \ref{tab.str} and Table \ref{tab.mono}. The numbers of oracles required to achieve the specified error $\epsilon$ are listed, which show that DVRSFBF requires significantly fewer oracles. These results strongly demonstrate that the proposed algorithm effectively reduces the sampling cost. Table \ref{tab.K} presents the impact of varying the number of inner loop iterations \( K \) on the oracle complexity while keeping all other conditions invariant. Our findings indicate that deviating from the optimal iteration count (\( K = 20 \) in this setting) results in an increase in oracle evaluations. This phenomenon arises because when \( K \) is too small, the SVRG is not fully exploited, necessitating more outer iterations. Conversely, excessively large \( K \) directly increases the total number of iterations and samples.

\begin{table}[t]
	\caption{Oracle complexity required by two algorithms (strongly mono.)}
	\label{tab.str}
	\begin{center}	
		\renewcommand\arraystretch{1.25}
		\begin{tabular}{|l|l|p{40pt}|l|}
			\hline
			\multirow{2}{*}{Size of network} & \multirow{2}{*}{\makecell{Parameter of\\batch size sequence}} & \multicolumn{2}{c|}{Empirical oracle complexity} \\
                \cline{3-4}
                                                 &                                                  & DVRSFBF  & vr-SMFBS \\
			\hline
			\multirow{2}{*}{$N=20,\,m=7$} & $\eta=0.99$ & $6.6\times 10^{5}$ & $ 1.4\times 10^{8}$ \\ 
                \cline{2-4}
                                        & $\eta=0.98$ & $1.8\times 10^{6}$ & $> 10^{9}$\\
			\hline
                \multirow{2}{*}{$N=10,\,m=5$} & $\eta=0.99$ & $1.2\times 10^{5}$ & $1.1\times 10^{6}$ \\ 
                \cline{2-4}
                                        & $\eta=0.98$ & $1.0\times 10^{5}$ & $2.9\times 10^{6}$\\
                \hline
                \multirow{2}{*}{$N=5,\,m=3$} & $\eta=0.99$ & $9.3\times 10^{4}$ & $1.3\times 10^{5}$ \\ 
                \cline{2-4}
                                        & $\eta=0.98$ & $2.3\times 10^{4}$ & $4.6\times 10^{4}$\\
			\hline
                \multicolumn{4}{p{240pt}}{The allowable error $\varepsilon=1\times10^{-4}$, and the batch size $S_t=\lfloor {\eta^{-2(t+1)}} \rfloor$.}
		\end{tabular} 
	\end{center}
\end{table}

\begin{table}[t]
	\caption{Oracle complexity required by two algorithms (merely mono.)}
	\label{tab.mono}
	\begin{center}	
		\renewcommand\arraystretch{1.25}
		\begin{tabular}{|l|l|p{40pt}|l|}
			\hline
			\multirow{2}{*}{Size of network} & \multirow{2}{*}{\makecell{Parameter of\\batch size sequence}} & \multicolumn{2}{c|}{Empirical oracle complexity} \\
                \cline{3-4}
                                                 &                                                  & DVRSFBF  & vr-SMFBS \\
			\hline
			\multirow{2}{*}{$N=20,\,m=7$} & $\alpha=2$ & $3.3\times 10^{6}$ & $> 10^{9}$ \\ 
                \cline{2-4}
                                        & $\alpha=2.5$ & $4.0\times 10^{7}$ & $> 10^{9}$\\
			\hline
                \multirow{2}{*}{$N=10,\,m=5$} & $\alpha=2$ & $5.0\times 10^{5}$ & $9.7\times 10^{6}$ \\ 
                \cline{2-4}
                                        & $\alpha=2.5$ & $6.1\times 10^{6}$ & $1.2\times 10^{8}$\\
                \hline
                \multirow{2}{*}{$N=5,\,m=3$} & $\alpha=2$ & $1.4\times 10^{5}$ & $1.4\times 10^{6}$ \\ 
                \cline{2-4}
                                        & $\alpha=2.5$ & $1.7\times 10^{6}$ & $1.7\times 10^{7}$\\
			\hline
                \multicolumn{4}{p{240pt}}{The allowable error $\varepsilon=1\times10^{-4}$, and the batch size $S_t=T^\alpha$ where $T$ is set as 150.}
		\end{tabular} 
	\end{center}
\end{table}

\begin{table}[t]
	\caption{Oracle complexity required when varying $K$ (strongly mono.)}
	\label{tab.K}
	\begin{center}	
		\renewcommand\arraystretch{1.25}
		\begin{tabular}{|l|l|l|l|l|}
			\hline
			\multirow{2}{*}{Algorithm} & \multicolumn{3}{c|}{DVRSFBF} & \multirow{2}{*}{vr-SMFBS}\\
                \cline{2-4}
                          &  $K=10$  &  $K=20$  & $K=50$ &  \\
			\hline
			\makecell{Number of\\oracles} & $5.7\times 10^{6}$ & $6.6\times 10^{5}$ & $2.0\times 10^{6}$ & $1.4\times 10^{8}$\\ 
                \hline
                \multicolumn{5}{p{240pt}}{Size of network $N=20,\,m=7$. The allowable error $\varepsilon=1\times10^{-4}$. The batch size $S_t=\lfloor {\eta^{-2(t+1)}} \rfloor$, where $\eta=0.99$.}
		\end{tabular} 
	\end{center}
\end{table}

\section{Conclusion}
In this paper, we proposed a distributed variance-reduced stochastic forward-backward-forward splitting (DVRSFBF) algorithm for solving stochastic generalized Nash equilibrium problems (SGNEPs). Our approach incorporates variance reduction techniques into a structured monotone operator framework, allowing it to effectively handle problems with a possibly infinite sample space and a biased stochastic estimator. We provided a comprehensive theoretical analysis, proving that the proposed method achieves almost sure convergence under mild assumptions. Furthermore, we established a linear convergence rate when the mapping is strongly monotone, while the rate worsens to \( O(1/T) \) under monotonicity of the map. The variance reduction mechanism enables more efficient utilization of gradient information, leading to improved performance in equilibrium computation. Numerical experiments on a networked Cournot game demonstrate that DVRSFBF outperforms existing typical variance-reduced methods, exhibiting faster convergence and lower sampling cost. These results validate our theoretical findings and highlight the practical advantages of our approach in large-scale multi-agent systems. Future work includes exploring extensions to more general stochastic equilibrium problems beyond the monotone setting, as well as investigating distributed variance reduction schemes under constrained communication.


\section*{References}
\bibliographystyle{ieeetr}
\bibliography{mybib.bib}

\end{document}